\documentclass[12pt,letterpaper]{amsart}

%

\usepackage{amsmath,amssymb}
\usepackage{amsthm}
\usepackage{graphicx}
\usepackage{hyperref}
\usepackage{mathtools}
\usepackage{verbatim}
\usepackage[margin=1in]{geometry}

\usepackage{tikz-cd}

\usepackage{enumitem}
\setlist{
leftmargin=2em}

\numberwithin{equation}{section}

\newtheorem{cor}[equation]{Corollary}

\newtheorem{prop}[equation]{Proposition}
\newtheorem{theorem}[equation]{Theorem}
\newtheorem{lemma}[equation]{Lemma}
\newtheorem{proposition}[equation]{Proposition}

\theoremstyle{remark}
\newtheorem{rem}[equation]{Remark}

\newtheorem{remark}[equation]{Remark}

\theoremstyle{definition}


\newcommand{\sm}{\setminus}

\def\<#1>{\left< #1 \right>}
\newcommand{\dd}{\partial}


\newcommand{\C}{\mathbb C}
\newcommand{\D}{\mathbb D}

\newcommand{\Q}{\mathbb Q}
\newcommand{\R}{\mathbb R}

\newcommand{\Z}{\mathbb Z}


\newcommand{\FF}{\mathcal F}

\newcommand{\QQ}{\mathcal Q}
\newcommand{\RR}{\mathcal R}


\newcommand{\chat}{\hat\C}


\DeclareMathOperator{\Dom}{Dom}

\setlength{\marginparwidth}{2cm} 
\usepackage[textsize=small]{todonotes}

\DeclareMathOperator{\Exp}{Exp}

\usepackage[style=alphabetic, backend=biber]{biblatex}
\addbibresource{references.bib}
\title{Non-degenerate near-parabolic renormalization}
\author{Alex Kapiamba}
\date{}

\begin{document}
	\begin{abstract}
		Invariant classes under parabolic and near-parabolic renormalization have proved extremely useful for studying the dynamics of polynomials. The first such class was introduced by Inou-Shishikura \cite{shishikura} to study quadratic polynomials; their argument has been extended to the unicritical cubic case by Yang \cite{Yang} and the general unicritical case by Ch\'eritat \cite{Cheritat22}. However, all of these classes are only applicable to maps which have a fixed point with multiplier close to one, though it is well-known that similar phenomena occur when the multiplier is close to any root of unity. In this paper we define the parabolic and near-parabolic renormalization operators in the general setting and construct invariant classes.
		In the general setting we can observe a new phenomenon: the multiplier may be close to several roots unity. In this case, we show how to directly relate the different near-parabolic renormalizations that arise.
	\end{abstract}
	
	\maketitle

	\section*{Introduction}
	
	Let $f(z)$ be a holomorphic function defined on a neighborhood of a fixed point $z_0\in \mathbb{C}$.
	The \textit{multiplier} of the fixed point is the quantity $\lambda=f'(z_0)$, and $z_0$ is called \textit{parabolic} when $\lambda$ is a root of unity. 
	The parabolic fixed point is called \textit{simple} when $\lambda =1$, and \textit{non-degenerate} when $f''(0)\neq 0$.
	
	When $f(z)$ has a simple non-degenerate parabolic fixed point $z_0$ the local dynamics of $f$ near $z_0$ are relatively tame, however for a holomorphic function $h(z)$ which is a perturbation of $f(z)$ the local dynamics of $f$ can be drastically more complicated. This phenomenon, called \textit{parabolic implosion}, was first studied by  Douady-Hubbard \cite{Orsay1,Orsay2} and Lavaurs \cite{Lavaurs} using Fatou coordinates and Lavaurs maps. While describing the Hausdorff dimension of the Mandelbrot set in \cite{shishikura_boundary}, Shishikura introduced parabolic and near-parabolic renormalization, providing another lens to study parabolic implosion. The parabolic renormalization acts on maps $f$ with a simple non-degenerate parabolic fixed point; Shishikura introduced a class of maps invariant under parabolic renormalization in \cite{shishikura_1}. The near-parabolic renormalization, an example of a \textit{cylinder renormalization} as in \cite{yampolsky}, acts on maps $h$ which are perturbations of maps with simple non-degenerate parabolic fixed points; Inou-Shishikura introduced a class of maps invariant under near-parabolic renormalization in \cite{shishikura}. The Inou-Shishikura class has had several remarkable applications:
	 it is used by Buff-Ch\'eritat to prove the existence of quadratic Julia sets of positive measure \cite{positive_area}, by Cheraghi-Ch\'eritat to partially resolve the Marmi-Moussa-Yoccoz conjecture \cite{MMYconjecture}, and by Cheraghi-Shishikura to make progress towards the celebrated MLC conjecture \cite{CS_satellite}.
	There are numerous other applications of the Inou-Shishikura class, see for example \cite{AC18}, \cite{Cheraghi13}, \cite{Cheraghi17}, \cite{Cheraghi19},  \cite{Shishkura-Yang}. 
	
	All maps in the Inou-Shishikura class have critical points of local degree two, so it is most commonly used to study the dynamics of quadratic polynomials. 
	In \cite{Yang}, Yang has modified the argument of Inou-Shishikura to produce a class of maps, all with critical points of local degree three, which is invariant under parabolic and near-parabolic renormalization, allowing for generalization of the applications to unicritical cubic polynomials.
	Ch\'eritat has introduced smaller classes of maps which are invariant under parabolic and near-parabolic renormalization \cite{Cheritat22}; Ch\'eritat's construction is flexible enough so that the classes can have critical points of arbitrary local degree. This allows for further generalization of the applications of the Inou-Shishikura class. 
	
	All of the applications of the above invariant classes are limited to studying perturbations of maps with simple non-degenerate parabolic fixed points. However, it is well-known that similar parabolic implosion phenomena occur in the general setting.
	For a holomorphic map $f(z)$ which has a parabolic fixed point $z_0$ the local dynamics of $f$ near $z_0$ remain relatively tame: if the multiplier of the fixed point is a $q$-th root of unity then orbits are attracted towards $z_0$ along $\nu q$ distinct directions and repelled away from $0$ along $\nu q$ other distinct directions for some integer $\nu\geq 1$. The parabolic fixed point is called \textit{degenerate} or \textit{non-degenerate} if $\nu >1$ or $\nu= 1$ respectively. The parabolic implosion in the degenerate case is studied in \cite{Oudkerk}; there the bifurcation phenomenon is quite complicated and invariant classes under the corresponding near-parabolic renormalization have not been constructed.
	But in the general non-degenerate case the parabolic implosion is analogous to the simple case; Shishikura outlines the necessary modifications to generalize the analysis in \cite{shishikura_boundary}.

	There are three main goals of this paper. The first is to
	provide precise statements for the parabolic implosion phenomena in the non-degenerate setting; while Shishikura outlines the necessary generalizations in \cite{shishikura_boundary} the reader must reconstruct the statements themselves. Additionally, we correct a small error in \cite{shishikura_boundary} which is due to the fact that in the general setting, for a non-degenerate parabolic fixed point with multiplier $e^{2\pi i p/q}$,  Shishikura considers perturbations of with multiplier $e^{2\pi i (p+\alpha)/q}$. We instead consider a parameterization of the multiplier in terms of continued fractions; for more details on the difference see Remark \ref{rem:error}.
%
%

	The second goal of this paper is to construct invariant classes for the near-parabolic renormalizations in the general non-degenerate setting. 
	While the constructions employed in  \cite{shishikura} and \cite{Yang} are delicate and can not be immediately adapted, we observe that the construction in \cite{Cheritat22} is flexible enough to generalize. Thus for any root of unity we can construct  a class of maps invariant under the corresponding near-parabolic renormalization, allowing for further possible future extensions of the applications of the Inou-Shishikura class.

	The third goal of this paper is to compare the different  near-parabolic renormalizations that can be defined for a given map. 
	As an example, we consider a map 
	$h_0$ with a fixed point of multiplier $\lambda = e^{\frac{2\pi i \alpha}{n-\alpha}}$ for some small $\alpha$ and large integer $n\geq 0$.
	As $\lambda\approx e^{2\pi i /n}$, (in some cases) we can define a near-parabolic renormalization $\RR_{1/n} h_0$.
	But $\lambda\approx e^{2\pi i /n}$, we can also define a different near-parabolic renormalization $\RR_{1/n}h_0$. The map $\RR_{1/n}h_0$ has a fixed point with multiplier $e^{2\pi i \alpha}\approx 1$, so we can defined a secondary near-parabolic renormalization $\RR_1\RR_{1/n}h_0$. Our result, given more precisely in Theorem \ref{thm:comparison}, relates these renormalizations by: $$\RR_1 h_0 = \RR_1\RR_{1/n}h_0.$$

	This paper is organized as follows.
	In  $\S1$ and $\S2$ we present the parabolic and near-parabolic renormalizations respectively in the general non-degenerate setting. In  $\S3$ we define classes invariant under these renormalization operators. 
	In $\S4$ we relate different near-parabolic renormalizations.
	Let us remark that, outside of $\S4$, most of the ideas in this article are not new; the main novelty is collecting them together into one presentation. As such, we will usually avoid completely recreating classical arguments, instead referring to \cite{shishikura_1}, \cite{shishikura}, and \cite{Cheritat22} for the arguments in the simple non-degenerate case and highlighting the adjustments (or lack thereof) in the general setting. We will however, include a more detailed study of the construction of Fatou coordinates in the appendix.

	\noindent \textbf{Acknowledgments.} I am grateful to the referee for their many thoughtful comments and suggestions. This research was supported in part by the NSF and the Ford Foundation.

	\section{Parabolic  renormalization}\label{sec:parabolic}
	
	In this section we recall the theory of parabolic renormalization as introduced in \cite{shishikura_1}, but in the setting of general rational rotation numbers. 
	Some of the proofs are omitted;  we delay their discussion to the appendix.

	For any  $p/q\in \Q$, an analytic function $f$ defined on a neighborhood of zero satisfying $f(0) = 0$ and $f'(0) = e^{2\pi i p/q}$
	is said to have a $p/q$-parabolic fixed point at zero. If additionally
	$$f^q(z) = z+az^{q+1}+O(z^{q+2})$$
	near $z=0$ for some $a\in \C^*$, then the parabolic fixed point is called \textit{non-degenerate}.

\subsection{Petals and Fatou coordinates}

For any $t\in \R$, we will call a set of the form $$\{x+iy: x< ty+m\}\text{ or }\{x+iy: x>ty+m\}$$ with $m\in \R$ a \textit{left} or \textit{right half-plane} with \textit{tilt $t$} respectively.
For any $\lambda\in \C$, we denote $T_\lambda(z) = z+\lambda$.

For any $p/q\in \Q$, $t\in \R$, and analytic function $f$ defined in a neighborhood of zero, a \textit{parabolic $p/q$-flower near $0$ for $f$ with tilt $t$} is a collection $(P_j, \phi_j)_{j\in \Z/2q\Z}$ such that (see Figure \ref{parabolic petals fig}):

	\begin{figure}
	\begin{center}
		\def\svgwidth{6in}
		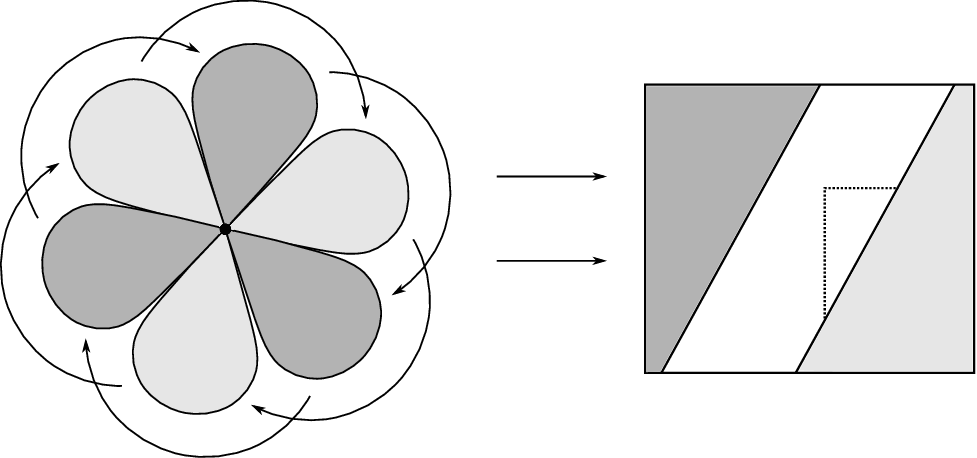
		\caption{A parabolic flower for $f$ with tilt $t$ and $p/q=-1/3$.}
		\label{parabolic petals fig}
	\end{center}
\end{figure}
\begin{enumerate}
	\item Each $P_j$ is a Jordan domain compactly contained in $\Dom(f^q)$, $\overline{P_j}\cap \overline{P_{j'}}= \{0\}$ for any $j\neq j'$, and the counter-clockwise circular ordering of the domains $P_j$ around zero is given by the ordering of $\Z/2q\Z$.
	\item Each $\phi_j$ is a conformal map defined on $P_j$  satisfying $$\phi_j\circ f^q = T_1\circ \phi_j$$
	wherever both sides of the equation are defined.
	\item If $j$ is even, then $\phi_j(P_j)$ is a right half-plane with tilt $t$ and
	$f^{nq+1}(P_{j})\subset P_{j+2p}$ for all large $n\geq 0$. 
	\item If $j$ is odd, then $\phi_j(P_j)$ is a left half-plane with tilt $t$ and  
	$f^{-(nq+1)}(P_{j+2p})\subset P_j$ for all large $n\geq 0$ and the continuous branch of $f^{-(nq+1)}$ fixing zero.
	\item Any forwards or backwards orbit under $f^q$ that converges towards $0$ is eventually contained in some $P_j$. 
\end{enumerate}

For a parabolic flower $(P_j, \phi_j)$, the domains $P_j$ are called the \textit{petals} of the flower and the maps $\phi_j$ are called the \textit{Fatou coordinates}; if $j$ is even or odd the petals and Fatou coordinates are called \textit{attracting} or \textit{repelling} respectively.
The Fatou coordinates are uniquely defined up to post-composition by a translation, see for example \cite[Theorem 10.9]{Milnor}, and a particular choice of Fatou coordinates is called a \textit{normalization}.

\begin{remark}
	Throughout the literature there are several different definitions of parabolic flowers and petals, see for example \cite[Chapter 10]{Milnor}. We choose the definition given above because it is convenient to explicitly control the geometry of petals.
\end{remark}

The following classical result of Leau \cite{leau} and Fatou \cite{fatou_flower} asserts that non-degenerate parabolic fixed points produce parabolic flowers:

\begin{theorem}\label{flower}
	If $f$ is a holomorphic map defined in a neighborhood $V$ of zero and with a non-degenerate $p/q$-parabolic fixed point at zero, then for any $t\in \R$ there is a $p/q$-parabolic flower for $f$ with tilt $t$ inside $V$.
\end{theorem}

Let us now fix a map $f$ with a non-degenerate $p/q$-parabolic fixed point at zero and a parabolic $p/q$-flower $(P_j, \phi_j)$.
While the parabolic flower is not uniquely defined, any two flowers are closely related:
\begin{prop}\label{prop:petals}
	If $(\tilde{P}_j, \tilde{\phi_j})$ is another parabolic flower for $f$, then we can normalize the Fatou coordinates so that  $\phi_j^{-1}= \tilde \phi_{j+k}^{-1} $ on $\phi_j(P_j)\cap \tilde \phi_{j+k}(\tilde P_{j+k}) $ for all $j$ and some even $k$.
\end{prop}
Let us also note that the petals cannot spiral around zero:
\begin{prop}\label{no spiral}
	There exists some $M>0$ such that there is a continuous branch of $\log$ defined on each $P_j$ such that $\log(P_j)/2\pi i$ is contained in a strip with tilt zero and width $M$. 
\end{prop}

	The attracting Fatou coordinate $\phi_0$ can be analytically extended to a map 
	$$\rho(z) := \phi_{0}\circ f^{n}(z)-\lfloor n/q\rfloor$$
	for any integer $n\geq 0$ and $z\in \Dom(f^{n})$ such that $f^{n}(z)\in P_{0}$, where $\lfloor x \rfloor$ is the largest integer less than or equal to $ x$. 
	The map $\rho$ is called an \textit{extended attracting Fatou coordinate} and semi-conjugates $f^q$ to $T_1$.	
	Points in the domain of $\rho$ can be labeled by how many iterates modulo $q$ it takes for them to enter $P_0$: for any $0\leq k < q$ we will call the set of all $z\in \Dom(\rho)$ such that $f^{nq+k}(z)\in P_0$ the \textit{$k$-th section} of $\Dom(\rho)$.  
	
	The inverse of the repelling Fatou coordinates $\phi_{\pm 1}$ can be similarly extended to
	$$\chi_\pm(w) := f^{nq}\circ \phi_{\pm 1}^{-1}(w-n)$$
	for any $n\geq 0$ and $w$ with  $\phi_1^{-1}(w-n)\in P_{\pm 1}\cap \Dom(f^{nq})$.
	The maps $\chi_\pm$ are called \textit{extended repelling Fatou parameters} and semi-conjugate $T_1$ to $f^q$.

	\begin{remark}
		The extended maps $\rho$ and $\chi_\pm$ depend on the normalization of the Fatou coordinates; changing the normalization corresponds to post-composing and pre-composing by a translation respectively. 
		Note that Proposition \ref{prop:petals} implies that these extended maps only depend on the normalization and not the choice of flower; we will see this fact more explicitly in Proposition \ref{prop:unqiue extensions} below.   
	\end{remark}
	
	\begin{remark}
		While we have only defined extensions for the Fatou coordinates $\phi_0$ and $\phi_{\pm 1}^{-1}$, we could similarly extend the other Fatou coordinates.
	\end{remark}
	
	\begin{remark}
		In some cases it is more convenient to define the extension of the attracting Fatou coordinate so that $$\rho(z) = \phi_0\circ f^n(z)-n/q.$$ 
		With such a definition, $\rho$ semi-conjugates $f$ to the translation $z\mapsto 1/q$. 
		 Our choice of definition is made so that 
		Proposition \ref{similar horn maps} below holds.
	\end{remark}

	\subsection{Horn maps}
	
	For the flower $(P_j, \phi_j)$ and corresponding extensions $\rho$ and $\chi_\pm$ as above,  the functions
	$$H_{\pm} : = \rho\circ \chi_{\pm}$$
	are called \textit{horn maps} for $f$.
	\begin{proposition}\label{horn map parabolic}
		The horn maps $H_{\pm}$ commute with $T_1$ and are analytic on their domains, which contain $\{w\in \mathbb{C}: |\emph{Im}\,w|> \eta_0\}$ for some $\eta_0>0$. 
		For any normalization of $\phi_0$, there exist unique normalizations of $\phi_{\pm 1}$  such that 
		$H_{\pm}(w) -w$ tends to zero when  $\emph{Im}\, w\to \pm \infty$ and to a constant when $\emph{Im}\,w\to \mp\infty$.
	\end{proposition}

	With Proposition \ref{horn map parabolic}, we can now specify some normalizations of the Fatou coordinates.
	Fixing some $z_0$ in the $0$-th section of $\Dom(\rho)$, we can normalize $\phi_0$ so that $\rho(z_0) = w_0$. 
	We can then take the normalizations of $\phi_{\pm 1}$ as in Proposition \ref{horn map parabolic}, so $H_\pm(w)-w\to 0$ when $\text{Im}\,w\to \pm \infty$.
	With this choice, we will say that the Fatou coordinates are \textit{normalized by $(z_0, w_0)$}. 
	\begin{prop}\label{prop:unqiue extensions}
		The maps $H_\pm$ are uniquely determined by $f$ and the normalization.
	\end{prop}
	
	\begin{proof}
		Let $(P_j, \phi_j)_{j\in \Z/2q\Z}$ and $(\tilde{P}_j, \tilde \phi_j)_{j\in \Z/2q\Z}$ be two flowers for $f$ with corresponding 
		Fatou extensions $\rho, \chi_\pm$ and $\tilde \rho, \tilde \chi_\pm$ and 
		horn maps $H_\pm$ and $\tilde{H}_\pm$ respectively.
		
		It follows from Proposition \ref{prop:petals} that we can choose the Fatou coordinates $\tilde \phi_j$ so that 
		there is some even integer $s$ with $\phi_j^{-1}=\tilde \phi_{j+s}^{-1}$ on $\phi_j(P_j)\cap \tilde\phi_{j+s} (\tilde P_{j+s}).$
		In order to choose the same normalizations for $H_\pm$ and $\tilde{H}_\pm$, there must be some $z_0$ in the $0$-th sections of both $\rho$ and $\tilde \rho$, so $s = 0$.
		It then follows immediately from the definitions of the extensions that $\rho = \tilde \rho$ and $\chi_\pm = \tilde\chi_\pm$, hence $H_\pm= \tilde H_\pm$. 
	\end{proof}

	Fixing some normalization for $H_\pm$ as above, 
	the two horn maps $H_+$ and $H_-$ also differ by translations:

	\begin{prop}\label{similar horn maps}
		There is some $\lambda\in \C$ such that $$H_{+}(w) -H_{-}\circ T_\lambda(w) \in \{0, 1\}$$
		for all $w\in \Dom (H_{+}).$ 
	\end{prop}
	
	\begin{proof}
		Theorem \ref{flower} implies
		that there are some integers $0\leq j < q$ and $n\geq 0$ such that $f^{-(nq+j)}(P_{-1})\subset P_{+1},$ using the inverse branch that fixes zero.
		The map $\phi_{-1}\circ f^{-(nq+j)}$ is also a Fatou coordinate for $f^q$ on $P_{+1}$, so it follows from the uniqueness of Fatou coordinates that there is some  $\lambda\in \C$ such that $$\phi_{-1}\circ f^{-(nq+j)} =  T_{\lambda-n}\circ \phi_{+1}$$ on $P_{+1},$ so 
		$$f^{j}\circ \chi_- = \chi_+\circ T_{-\lambda}.$$
		It follows from our definition of $\rho$ that $\rho\circ f^j(z)-\rho(z)\in \{0, 1\}$ for any $z$ in the domain, the proposition then immediately follows from the definitions of $H_{\pm}.$
	\end{proof}
	
	Throughout this article we will consider analytic maps in the compact-open topology with varying domains, i.e. a neighborhood of $f$ is a set of the form 
	$$\left\lbrace h:\Dom(h)\to \mathbb{C} \;\middle|\;
	\begin{tabular}{@{}l@{}}
		$h \text{ is analytic on }\Dom(h)\supset K, \text{ and }$\\
		$|f(z) - h(z)|< \epsilon \text{ for all }z\in K$
	\end{tabular}
	\right\rbrace$$
	for some $\epsilon>0$ and compact set $K\subset \Dom(f).$ 
	In this topology, with a fixed choice of normalization, the horn maps depend continuously and analytically on $f$:
	\begin{prop}\label{continuous dependence}
		Fixing a normalization by some $(z_0, w_0)$,  		
		$\rho, \chi_\pm$ and $H_\pm$ all depend continuously and analytically on $f$.
	\end{prop}

	\subsection{Parabolic renormalization}
	Denoting  $\Exp_\pm(w) = e^{\pm 2\pi i w}$, we define a \textit{parabolic renormalization} of $f$  to be a map of the form
	\begin{align*}
		\mathcal{R}_{\delta}^\pm f&:= \Exp_\pm\circ  H_{\pm }\circ T_{\delta}\circ(\Exp_\pm)^{-1}
	\end{align*}
	for some $\delta\in \mathbb{C}$.
	Proposition \ref{horn map parabolic} implies that 
	$\mathcal{R}_{\delta}^\pm f$ is defined on punctured neighborhoods of zero and infinity in $\hat{\mathbb{C}}$ and can be continuously extended by setting
	$\mathcal{R}_\delta^\pm f(0) = 0$ and $\mathcal{R}_\delta^\pm f(\infty) = \infty.$ 
	As $H_\pm(w) -w\to 0$ when $\text{Im}\,w\to \pm \infty$,
	we can compute the derivatives $$(\mathcal{R}_\delta^\pm f)'(0) = \Exp_\pm(\delta).$$
	Note that Proposition \ref{similar horn maps} allows to relate parabolic renormalizations by 
	$$\RR_{\delta}^+f(w) = \frac{1}{\RR_{\delta+\lambda}^-f(1/w)}$$
	for some $\lambda\in \C$ depending only on $f$.
	 To simplify our notation, we will usually write $\Exp = \Exp_+$.
	
	Proposition \ref{prop:unqiue extensions} implies that the renormalization $\RR_\delta^\pm f$ 
	depends only on the normalization of Fatou coordinates and not on the choice of parabolic flower. 
	In the Section \ref{sec:classes}  we will restrict to a family of maps with  canonical choices of normalization, so the renormalizations $\RR_\delta^\pm f$ will be uniquely defined.	
	It  follows from Proposition \ref{continuous dependence} that, for a fixed normalization, the renormalizations $\RR_\delta^\pm f$ depend continuously and analytically on $f$.
	
	\begin{remark}
		The definition of parabolic renormalization we give here differs from the definition in  \cite{shishikura}, where \textit{the} parabolic renormalizations is defined as $\mathcal{R}_0^\pm f$. While that definition is well-suited for studying maps with non-degenerate $0/1$-parabolic fixed points, for our more general setting it is important to consider more possible values for the multiplier. Note however that the maps $\RR_\delta^\pm f$ need not have parabolic fixed points.
	\end{remark}
	
	\begin{remark}
		Choosing two different normalizations for the horn maps corresponds to conjugating $H_\pm$ by a translation, and hence conjugating the renormalizations $\RR_\delta^\pm f$ by a linear map. If we wanted to work without fixing a normalization for the horn maps, the parabolic renormalizations would be uniquely defined up to linear conjugacy.
	\end{remark}

	\begin{remark}
		We could have  defined $\mathcal{R}^\pm_\delta f$ so that it is semi-conjugate 
		to $T_\delta\circ H_\pm$ instead of $H_\pm\circ T_\delta$. This distinction is purely aesthetic; in this article it is convenient to have the critical values of $\mathcal{R}_\delta^\pm f$ not depend on $\delta$, in some other cases it may instead convenient to have the domain of $\mathcal{R}_\delta^\pm f$ not depend on $\delta.$
	\end{remark}

	\subsection{Lavaurs maps}
	
	While we will focus primarily on the parabolic renormalization, there is an alternative formulation of parabolic implosion in terms of \textit{Lavaurs maps}. 
	A Lavaurs map for $f$ is a map of the form 
	$$L_\delta^\pm= \chi_\pm\circ T_\delta\circ \rho$$
	for $\delta\in \C$.
	These Lavaurs maps are analytic and satisfy 
	$$L_{\delta}^\pm\circ f^q= L_{\delta+1}^\pm= f^q\circ L_{\delta}^\pm$$
	wherever both sides of the equation are defined. Note that a Lavaurs map
	is defined on all of $\Dom(\rho)$ when $f(\Dom(f))\subset \Dom(f)$, otherwise the domain may be smaller. However, for any $z\in \Dom(\rho)$ and $\delta\in \C$, $L_{\delta-n}^\pm(z)$ is defined for all sufficiently large $n\geq 0$.
	
	A Lavaurs map $L_{\delta}^\pm$ is semi-conjugate to $H_{\pm}\circ T_\delta$ by both $\rho$ and $\chi_\pm\circ T_\delta$, and thus is also semi-conjugate to $\RR_\delta^\pm f$.
	In different applications, it is may be convenient to focus on either  Lavaurs maps,  horn maps, or  parabolic renormalizations; the commutative diagram below shows explicitly how they are all related:
	
	\begin{center}	
	\begin{tikzcd}
		\Dom(L_\delta^2) \arrow[d, "\rho"'] \arrow[r, "L_\delta^\pm"]                                                        & \Dom(L_\delta) \arrow[r, "L_\delta^\pm"] \arrow[d, "\rho"']    & \C \\
		\Dom(H_\pm\circ T_\delta) \arrow[d, "\Exp_\pm"] \arrow[ru, "\chi_\pm\circ T_\delta"] \arrow[r, "H_\pm\circ T_\delta"] & \C \arrow[d, "\Exp_\pm"] \arrow[ru, "\chi_\pm\circ T_\delta"'] &    \\
		\Dom(\RR_\delta^\pm f) \arrow[r, "\RR_\delta^\pm f"]                                                                 & \C^*                                                           &   
	\end{tikzcd}
	\end{center}

	\section{Near-parabolic renormalization}\label{sec:near-parabolic}
	
	Let us now recall the theory of near-parabolic renormalization as introduced in \cite{shishikura_1}.
	As in the previous section, we will  delay some proofs to the  appendix.

	For $p/q\in \Q$ and $g$ an analytic map with $g(0) =0$ and $g'(0)$ close to $e^{2\pi i p/q}$, in this section we will see that some dynamics similar to the parabolic case can persist for $g$. 
	The precise dynamics will depend on  some arithmetic properties of $p/q$, specifically the \textit{modified continued fraction} expansion, which is used in  \cite{shishikura}
	to study near-parabolic renormalization in the $p/q=0/1$ case. 
	
	\subsection{Modified continued fractions}	
	For any rational number $p/q$, a \textit{modified continued fraction expansion} of $p/q$ is a sequence of pairs $\kappa = ( a_n, \varepsilon_n)_{n=0}^N$ such that 
	$a_0\in \Z$,  $a_n\in \Z_{\geq 2}$ for all $n\geq 1$, $\epsilon_n=\pm 1$ for all $n$, and 
	\begin{equation*}
		\epsilon_0p/q=  a_0+\cfrac{\varepsilon_1}{a_1+\cfrac{\varepsilon_2}{a_2+\cfrac{\varepsilon_3}{\ddots+\cfrac{\varepsilon_N}{a_N}}}}.
	\end{equation*}
	We will call $N\geq 0$ the \textit{length} of the modified continued fraction expansion; we denote by $\Q_N$ the set of all rational numbers that have a modified continued fraction expansion of length at most $N$.
	We define the \textit{signature} of the modified continued fraction to be 
	$$\mathfrak{S}(\kappa) = (-1)^N\prod_{n=0}^N\varepsilon_n.$$

	We define the M\"obius transformation $\mu_\kappa: \hat{\mathbb{C}}\to \hat{\mathbb{C}}$ by 
	\begin{equation*}\label{definition of mu}
		\epsilon_0\mu_{\kappa}(z) :=a_0+ \cfrac{\varepsilon_1}{a_1+\cfrac{\varepsilon_2}{a_2+\cfrac{\varepsilon_3}{\ddots+\cfrac{\varepsilon_N}{a_N+z}}}}.
	\end{equation*}
	Let us denote by $0\leq q'_\kappa < q$ the unique integer satisfying $pq'_\kappa + \mathfrak{S}(\kappa)\equiv 0 \mod q$.
	We have the following alternative expression of $\mu_\kappa$:
	\begin{prop}\label{prop:mobius form}
		We have
		$$\mu_{\kappa}(z) = \frac{p}{q}+\frac{\mathfrak{S}(\kappa)z}{q(q+ q_\kappa' z)}.$$
	\end{prop}
	
	\begin{proof}
		For all $0\leq n \leq N$, set  $\kappa_n = ( a_m, \varepsilon_m)_{m=0}^n$. 
		For all $0\leq n \leq N$, let $p_n/q_n$ be the rational number with modified continued fraction expansion $\kappa_n$; for $p_{-1} = \epsilon_0$ and $q_{-1} = 0$ these numbers satisfy the relation
		$$p_n= a_n p_{n-1} +\varepsilon_n p_{n-2}\text{ and }q_n= a_n q_{n-1} +\varepsilon_n q_{n-2}.$$
		As $$q_np_{n -1}- p_nq_{n-1} = -\varepsilon_n(q_{n-1}p_{n-2}- p_{n-1}q_{n-2}),$$
		we can show by induction that 
		\begin{equation}\label{eq:signature}
			q_np_{n-1}-p_nq_{n-1} = \mathfrak{S}(\kappa_n).
		\end{equation}		
		As $\mu_{\kappa_n}(z) = \mu_{\kappa_{n-1}}\left(\frac{\varepsilon_n}{a_n+ z}\right)$, we can also show by induction that 
		$$\mu_{\kappa_n}(z) = \frac{p_n+p_{n-1}z}{q_n+q_{n-1}z} =  \frac{p_n}{q_n} + \frac{\mathfrak{S}(\kappa_n)z}{q_n(q_n+q_{n-1}z)} .$$
		As \eqref{eq:signature} implies $q_{N-1} = q_{\kappa}'$, the proof is complete.
	\end{proof}
	
	Proposition \ref{prop:mobius form}  implies that $\mu_\kappa$ is uniquely determined by the signature of $\kappa$. We denote $\mu_{p/q}^\pm= \mu_\kappa$ for any modified continued fraction expansion $\kappa$ for $p/q$ with $\mathfrak{S}(\kappa) = \pm 1$. We will also denote $q_\pm' = q_\kappa$ in this case; while $q_\pm'$ depends on $p/q$ and not just $q$ we will not include this dependence in our notation when the choice of $p/q$ is clear.
	Using Proposition \ref{prop:mobius form}, we can directly compare $\mu_{p/q}^+$ and $\mu_{p/q}^-$. 
	
	\begin{prop}
		We have:
		$$\mu_{p/q}^- (z) = 
		\begin{cases}
			\mu_{p/q}^-(-z) & \text{ if } q= 1;\\
			\mu_{p/q}^+\left(\frac{-z}{1+z}\right) &\text{ if }q>1.
		\end{cases}
		$$
	\end{prop}
	
	\begin{proof}
		If $q = 1$, then it follows from the definition that $q_\pm' = 0$, so $\mu_{p/q}^-= \mu_{p/q}^+$ by Proposition \ref{prop:mobius form}.
		If $q> 1$, then it follows from the definition that $q_{-}' = q-q_+'$, so 
		$$\mu_{p/q}^-(z) = \frac{p}{q}+ \frac{-z}{q(q(1+z)-q_+'z)}= \mu_{p/q}^+\left(\frac{-z}{1+z}\right)$$
		as desired.
	\end{proof}
	
	Note that replacing $\epsilon_0$ with $-\epsilon_0$ in $\kappa$ corresponds to replacing $\mathfrak{S}(\kappa)$ and $p/q$ with $-\mathfrak{S}(\kappa)$ and $-p/q$ respectively. Hence $-\mu_{p/q}^\pm = \mu_{-p/q}^\mp$.
	
	\begin{remark}
		In the literature a modified continued fraction  is usually assumed to have $\epsilon_0= +1$, in which case any non-integer rational number has exactly modified continued fraction expansions. However, the two expansions may have drastically different lengths: for example we can write 
		$$0+\frac{1}{5}= 1+ \cfrac{-1}{2+\cfrac{-1}{2+\cfrac{-1}{2+\cfrac{-1}{2}}}}.$$
		By allowing $\epsilon_0=-1$, any $p/q\in \Q_N$ has a modified continued fraction expansions of length $N$ for either signature.
	\end{remark}

	\subsection{Petals and Fatou coordinates}
	

For any $t\in \R$, 
we will call a set of the form 
$$\{x+iy:ty+a< x< ty+b\}$$
with $a < b$ a \textit{strip with tilt $t$ and width $b-a$}.
For any $p/q\in \Q$, $t\in \R$, and analytic function $g$ defined in a neighborhood of zero, a \textit{near-parabolic $(p/q, \pm)$-flower near $0$ for $g$ with tilt $t$} is a collection $(P_j, \phi_j)_{j\in \Z/2q\Z}$ such that (see figure \ref{parabolic perturned fig}):

\begin{enumerate}
	\item 
	For any even $j$, each $P_j = P_{j\pm 1}$ is a Jordan domain compactly contained in $\Dom(g^q)$. For any even $j$ and $j'$,   $\overline{P_j}\cap \overline{P_j'}= \{0\}$. The counter-clockwise circular ordering of the domains $P_j$  around zero with $j$ even is given by the ordering of $2\Z/2q\Z$.
	\item Each $\phi_j$ is a conformal map defined on $P_j$ satisfying
	$$\phi_j\circ g^q= T_1\circ \phi_j$$
	wherever both sides of the equation are defined. 
	\item Each $\phi_j(P_j)$ is a strip with tilt $t$ and width at least $2$. 
	If $j\neq 0$, then $g(P_j) = P_{j+2p}.$
	\item Each $P_j$ has a non-zero fixed point of $g^q$ on its boundary, and $z\in P_j$ tends to zero when $\text{Im}\,\phi_j(z)$ tends to $\pm\infty$.
\end{enumerate}

\begin{figure}
	\begin{center}
		\def\svgwidth{6in}
		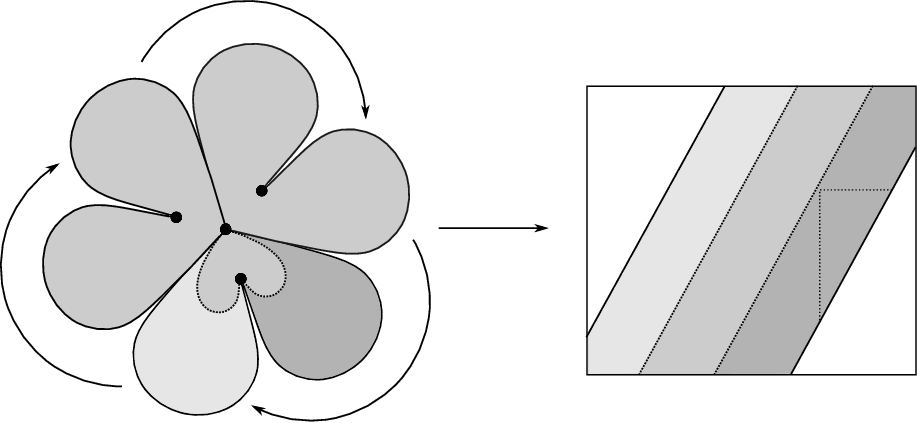
		\caption{A $(-1/3, +)$-near-parabolic flower for $g$ with tilt $t$. The spiraling of the petals around zero is controlled by Proposition \ref{prop:petals perturbed}. The points entering and exiting $P_0=P_1$ are shown in lighter and darker gray respectively.}
		\label{parabolic perturned fig}
	\end{center}
\end{figure}

As in the parabolic setting, the domains $P_j$ are called petals and the maps $\phi_j$ are called Fatou coordinates; the Fatou coordinates are again unique up to post-composition by translation.

For any $r>0$, we denote $$A_r= \{x+it:  x>|4ry|\}.$$
The following theorem, for which the $p/q=0/1$ case is proved in \cite{shishikura_1} and the general case in \cite{shishikura_boundary}, shows that perturbations
of parabolic maps have near-parabolic flowers:

\begin{theorem}\label{thm:near-parabolic flowers}
	Let $f$ be a holomorphic map with a non-degenerate $p/q$-parabolic fixed point at zero and  let $V$ be a neighborhood of $0$.
	For any $r>0$,
 	if $g$ is an analytic map with 
	$$g(0) = 0 \text{ and }g'(0) = \Exp\circ \mu_{p/q}^\pm(\alpha)$$
	for some $\alpha\in A_r$, then $g$ has a near-parabolic $(p/q, \pm)$-flower $(P_j, \phi_j)_{j\in \Z/2q\Z}$ with tilt $t$ such that 
	each $\phi_j$ depends continuously and holomorphically on $g$ for any $|t|< r$. Moreover, there is a parabolic flower $(P^f_j, \phi^f_j)_{j\in \Z/2q\Z}$ in $V$ for $f$   such that $\phi_j\to \phi_j^f$ when $g\to f$.
\end{theorem}

For the rest of this section let us fix some $p/q\in \Q$ and map $f$ with a  non-degenerate $p/q$-parabolic fixed point at zero.
Fixing some $t\in \R$ and $s>|t|$, let 
$g$ be an analytic map close to $f$ with $g(0) = 0$ and $g'(0) = \Exp\circ \mu_{p/q}^\pm(\alpha)$ for some $\alpha\in A_s$ and let 
$(P_j, \phi_j)$ be a near-parabolic $(p/q, \pm)$-flower for $g$ with tilt $t$ as in Theorem 
\ref{thm:near-parabolic flowers}. So each $\phi_j$ depends continuously and holomorphically on $g$ and tends to $\phi_j^f$ when $g\to f$ for some parabolic $p/q$-flower $(P_j^f, \phi_j^f)$ for $f$.
We will call $(P_j, \phi_j)$ a near-parabolic flower for $g$ \textit{relative to} $f$.
Throughout this section we will use the symbols $\pm$ and $\mp$ in reference to the corresponding choice for $g'(0)$.

Similarly to parabolic flowers,  near-parabolic flowers are not uniquely defined but  different flowers are  related:

\begin{prop}\label{prop:petals perturbed}
	If $(\tilde{P}_j, \tilde \phi_j)$ is another near-parabolic flower for $g$ relative to $f$, then we can normalize so that there is an even integer $k$ such that $\phi_j^{-1}= \tilde{\phi}_{j+k}^{-1}$ on the non-empty set $\phi_j(P_j)\cap \tilde\phi_{j+k}(\tilde P_{j+k})$ for all $j$. 
\end{prop}
Additionally, we observe that the near-parabolic flowers only weakly depend on $f$:
\begin{prop}\label{prop:near-parabolic independence}
	Let $\tilde{f}$ be another analytic map with a non-degenerate $p/q$-parabolic fixed point at zero. If $f$ is sufficiently close to $\tilde{f}$, then $(P_j, \phi_j)$ is also near-parabolic flower for $g$ relative to $\tilde{f}$.
\end{prop}
\begin{remark}
	In light of Proposition \ref{prop:near-parabolic independence}, we might wonder if the near-parabolic flower $(P_j, \phi_j)$ can be defined without assuming that $g$ is close to some particular $f$. In some cases, we can remove the dependence on $f$ entirely;
	 this is one of the consequences of Theorem \ref{near-parabolic invariance} in the next section.
\end{remark}

Unlike parabolic flowers, near-parabolic flowers may spiral around zero. This spiraling is controlled by the tilt and argument of $\alpha$:

\begin{prop}\label{prop:tilt}
	There exists some $M>0$ depending only on $f, s$, and $t$ such that if $g$ is sufficiently close to $f$ then there is a branch of $\log$ defined on each $P_{j}$ such that $\log (P_j)/2\pi i$ is contained in a strip with tilt $t'$ and width $t'|\log |\alpha||+M$, where  
	$$t' =\frac{t\emph{Re}\, \alpha'\mp\emph{Im}\, \alpha'}{ t\emph{Im}\,\alpha'\pm\emph{Re}\, \alpha'}\text{ and }\alpha' = \frac{q\alpha}{q+q_\pm'\alpha}.$$
\end{prop}

%
	
	\begin{remark}
		The spiraling of the petals around the non-zero fixed points of $g^q$ can be similarly computed: the petals lift to strips with tilt $-t'$ instead.
	\end{remark}

	Let $\rho^f$ and $\chi_\pm^f$ be the extensions of $\phi_0^f$ and $(\phi_{\pm 1}^f)^{-1}$ respectively as in the previous section; we can similarly extend $\phi_0$ and $\phi_{\pm 1}^{-1}$.
	Let $W$ denote the width of the strip $\phi_0(P_0)$.
	We will say that a point $z$ is \textit{entering} or \textit{exiting} $P_0=P_{\pm 1}$ if 
	$$\phi_0(z)-W/3\notin \phi_0(P_0) \text{ or }\phi_{\pm 1}(z)+ W/3\notin \phi_{\pm 1}(P_{\pm 1})$$
	respectively (see Figure \ref{parabolic perturned fig}). 	Note that by definition no point can be both entering and exiting $P_0$ when $g$ is close to $f$.
	We extend $\phi_0$ by defining
	$$\rho(z) = \phi_0\circ g^{n}(z)-\lfloor n/q\rfloor$$
	for any point $z$ and integers  $0 \leq n < qW/3$ such that $g^{n}(z)$ is entering $P_0$.
	For any $0\leq k < q$, we define the $k$-th section of $\Dom(\rho)$ to be the set of all $z$ so that $n \equiv k \mod q$.	
	We extend $\phi_{\pm 1}^{-1}$ by 
	$$\chi_\pm(w):= g^{nq}\circ \phi_{\pm 1}^{-1}\circ T_{-n}(w).$$
	for any $w$ such and integer $0\leq n< W/3$ such that $\phi_{\pm 1}^{-1}(w-n)\in \Dom(g^{nq})$ is exiting $P_{\pm 1}$.

	\begin{proposition}\label{prop:extensions}
		The maps $\rho$ and $\chi_\pm$ are well-defined, analytic, and converge to $\rho^f$ and $\chi^f$ respectively when $g\to f$. 
	\end{proposition}
	
	\begin{proof}
		It follows from the fact that the petals $P_j$ are all disjoint, and our use of the width $W/3$ in the definition of entering, that if $g^{n_1}(z)$ and  $g^{n_2}(z)$ are both  entering $P_0$ for some $n_1\leq n_2< qW/3$, then $n_2-n_1\in q\Z$. This immediately implies that $\rho$ is well-defined. The proof that $\chi_\pm$ is well-defined is similar.
		The analyticity of $\rho$ and $\chi_\pm$ holds automatically, and the convergence follows from the convergence $\phi_0\to\phi_0^f$ and  $\phi_{\pm 1}\to \phi_{\pm 1}^f$.  
	\end{proof}

%
%

	\subsection{Horn maps}
	
	The function $$H_\pm:= \rho\circ \chi_\pm$$
	is called a  \textit{horn map for $g$ relative to $f$}.

		\begin{proposition}\label{horn maps perturbed}
			Using the functional equation 
			$$T_1\circ H_\pm= H_\pm\circ T_1,$$
			when $g$ is close to $f$ we can analytically extend  $H_\pm$ to a $T_1$-invariant domain that contains $\{w\in \mathbb{C}: |\emph{Im}\,w|> \eta_0\}$ for some $\eta_0$ depending only on $f$.
			For any normalization of $\phi_0$, there is a unique normalization of $\phi_{\pm 1}$ so that $H_\pm(w)-w$ tends to zero when $\emph{Im}\,w\to \pm \infty$ and to a constant when $\emph{Im}\,w\to \mp\infty$.
	\end{proposition}
	
	Let $H_\pm^f = \rho^f\circ \chi_\pm^f$ be the corresponding horn map for $f$.
	Recall that $H_\pm^f$ is normalized so that $\rho^f(z_0)=w_0$ for some $z_0$ in the $0$-th section of $\Dom(\rho^f)$ and $w_0\in \C$, and so that $H_\pm^f(w)-w\to 0$ when $\text{Im}\,w\to \pm \infty$. 
	When $g$ is close to $f$, Proposition \ref{horn maps perturbed} implies that we can similarly normalize the horn map $H_\pm$ so that $\rho(z_0)= w_0$ and $H_\pm(w)-w\to 0$ when $\text{Im}\,w\to \pm \infty$. 
	Fixing this normalization, we can explicitly compute the difference between $\phi_0$ and $\phi_{\pm 1}$, which is called the \textit{phase} of $g$:
	\begin{prop}\label{prop:phase}
		Normalizing the Fatou coordinates for $(P_j, \phi_j)$ by some $(z_0, w_0)$, we have
		$$\phi_{\pm 1}= T_{-1/\alpha}\circ \phi_0.$$
		Moreover, $H_\pm$ depends continuously and holomorphically on $g$ and converges to $H_\pm^f$ when $g\to f$.
	\end{prop} 
	
	\begin{rem}\label{rem:error}
		Proposition \ref{prop:phase} is exactly the point where our description differs from  \cite{shishikura_boundary}. In \cite{shishikura_boundary}, Shishikura considers perturbations where $g'(0) = \Exp((p+\beta)/ q)$ with $\beta\in A_{s}$ and claims that when normalized we have $\phi_{\pm 1}= T_{-1/\beta}\circ \phi_0$ (while not  \textit{explicit}, the claim is implicit in the the modifications to \cite[(4.2.4)]{shishikura_boundary} in \cite[\S 7]{shishikura_boundary}). However, using Proposition \ref{prop:mobius form} to solve for $\alpha$ in terms of $\beta$, Proposition  \ref{prop:phase} implies that in this case  we instead have
		$$\phi_{\pm 1}(z) = \phi_0(z)-\frac{1}{q\beta}+\frac{q_+'}{q}.$$
		This discrepancy  is in many cases minor, for example it  has no effect when $q= 1$ or in \cite{shishikura_boundary}. However when considering repeated near-parabolic renormalization this discrepancy is significant,  most noticeably in Theorem \ref{thm:comparison} and its corollaries.
	\end{rem}

	We saw in Proposition \ref{prop:unqiue extensions} that the horn maps for $f$ depend on the normalization of the Fatou coordinates but not on the choice of flower.  
	The horn map for $g$ relative to $h$ is similarly independent on the choice of flower, though the independence is slightly weaker:
	
	\begin{prop}
		Let $X\subset \Dom(H_\pm^f)$ be a compact set. Any two continuous and holomorphic choices of horn map $H_\pm$ for $g$ relative to $f$ agree on $X$ when $g$ is close to $f$.
	\end{prop}
	
	\begin{proof}
		The convergence $\phi_0\to \phi_0^f$ and $\phi_1\to \phi_1^f$ implies that any compact set in $P_0^f$ or $P_1^f$ is entering or exiting $P_0=P_1$ respectively when $g$ is  close to $f$. With this observation, the proof is the same as Proposition \ref{prop:unqiue extensions}, using Proposition \ref{prop:petals perturbed} instead of Proposition \ref{prop:petals}.
	\end{proof}
	
	While the dynamics of the horn map $H_\pm^f$ are not related to the dynamics of $f$, the horn map dynamics of $H_\pm$ almost semi-conjugate to high iterates of $g$:

	\begin{prop}\label{lifing the renormalized dynamics}
		Let $z, z'$ be two points in $P_0$ and set $w= \phi_0(w)$, $w'= \phi_0(z')$. If 
		$$H_\pm\circ T_{n- 1/\alpha}(w) = w'$$
		for some integer $n$, then there are integers $m\geq 0$ and $0\leq k < q$ such that either
		\begin{align*}
			g^{(n+m)q+k}(z)&= g^{mq}(z') &&\hspace{-3cm}\text{ if }n\geq 0, \text{ or}\\
			g^{mq+k}(z) & = g^{(m-n)q}(z') && \hspace{-3cm}\text{ if }n\leq 0.
		\end{align*}
		If $z'$ is exiting $P_0$, then $m = 0$. If $|\emph{Im}\,w|$ is sufficiently large, then $m = 0$ and 
		$$k = \begin{cases}
			q_\pm' & \text{ if }\pm\emph{Im}\, w>0,\\
			0 &\text{ if }\mp\emph{Im}\, w< 0.
		\end{cases}$$
	\end{prop}
	
	\begin{proof}
		First we note that as $z\in P_0$, it follows from the definition of $H_\pm$ that there is some integer $j\geq 0$ satisfying
		\begin{equation}\label{eq:horn map}
			H_\pm\circ T_{n-1/\alpha}(w) = T_{n-j}\circ \rho \circ \chi_\pm\circ T_{j- 1/\alpha}\circ \phi_{0}(w).
		\end{equation}
		Indeed, the definition of $H_\pm$ guarantees \eqref{eq:horn map} for some integer $j$, and if $j< 0$ then we can replace $j$ with zero.		
		As $\phi_{\pm 1} = T_{-1/\alpha}\circ \phi_0$, it then follows from the definitions of $\rho$ and $\chi_\pm$ that there is are integers $m_0, m_1\geq 0$ and $0\leq k < q$ such that 
		\begin{align*}
			\phi_0(z') &=T_{n-j-m_0}\circ \phi_0\circ g^{(m_0+m_1)q+k} \circ  \phi_{\pm 1}^{-1}\circ T_{j-m_1}\circ \phi_{\pm 1}(z)\\
			&= T_{n-j-m_0}\circ \phi_0\circ g^{(j+m_0)q+k}(z).
		\end{align*}
		Hence \begin{align*}
			g^{nq+k}(z) &= z'&&\text{ if }n-j-m_0\geq 0, \text{ or}\\
			g^{(j+m_0)q+k}(z) &= g^{(j+m_0-n)q}(z') &&\text{ if }n-j-m_0< 0.
		\end{align*}
		Moreover, the definition of $\rho$ implies that $g^{(j+m_0)q+k}(z)$ is entering $P_0$; if $z'$ is exiting $P_0$ it then follows that $n-j-m_0\geq 0$.
		
		If $|\text{Im}\, w|$ is sufficiently large, then the orbit of $z$ stays close to either $0$ or a non-zero fixed point $\sigma$ on $\partial P_0$. In either case, we can apply the branches of $g^{-(j+m_0)q}$ that fix these points to conclude $g^{nq+k}(z) = z'$ or $g^{k}(z) = g^{-nq}(z')$.
		If  $\pm \text{Im}\,w\gg 0$, 
		then the orbit of $z$ under $g^q$ stays close to $0$ and travels 
		from $P_0$ to $P_{\pm 2}$, so $k = q_\pm'$.
		If $\mp\text{Im}\,w\gg 0$, then instead the orbit of $z$ stays close to $\sigma$ and travels from $P_0$ to $P_0$, so we must have $k = 0$. 
	\end{proof}

	\subsection{Near-parabolic renormalization}

	A \textit{near-parabolic renormalization of $g$ relative to $f$} is a function of the form
	$$\mathcal{R}_f^\pm g:=\Exp_\pm \circ  H_\pm\circ T_{-1/\alpha}\circ(\Exp_\pm)^{-1}$$
	Proposition \ref{horn maps perturbed} implies that this map is defined on punctured neighborhoods of zero and infinity in $\hat{\mathbb{C}}$,  can be continuously extended by setting
	$\mathcal{R}_f^\pm g(0) = 0$ and $\mathcal{R}_f^\pm g(\infty) = \infty$, and we  compute the derivative  $$(\mathcal{R}_f^\pm g)'(0) = \Exp_\pm(-1/\alpha).$$

	Fixing some normalization of the Fatou coordinates for $f$ by some $(z_0, w_0)$, it follows from Proposition \ref{horn maps perturbed} that we we can choose the parabolic renormalization so that  $\RR_f^\pm g$ depends continuously and holomorphically on $g$ and converges to $\RR_\delta^\pm f$ when $g\to f$ and $-1/\alpha\to \delta$ in $\C/\Z$. 
	We saw in the last section that the parabolic renormalization $\RR_\delta^\pm f$ is uniquely defined for a fixed normalization; we have the following similar but weaker statement for near-parabolic renormalization:

	\begin{proposition}\label{prop:eventually the same renormalizations}
		Let $X\subset \Dom \mathcal{R}_0^\pm f$ be a compact set. Any two  choices of $\RR_f^\pm g$ with the same normalization agree on $X$ when $g$ is close to $f$.
	\end{proposition}

	\begin{remark}
		Just as the parabolic renormalization $\RR^\pm_\delta f$ is uniquely defined up to linear conjugacy, the near-parabolic renormalization $\RR_g^\pm f$ can be made unique. As in the parabolic case, changing the normalization conjugates the renormalization by a linear map; the main difference is that the domain of the renormalization depends on the choice of flower. As the domain always contains $0$ and $\infty$, $\RR_f^\pm g$ is uniquely defined as a pair of germs up to linear conjugacy. In the next section we will restrict to classes of maps that have a canonical choice of normalization, which makes the near-parabolic renormalizations uniquely defined.
	\end{remark}

	\subsection{Lavaurs maps}
	
	Recall that Lavaurs maps for $f$ have the form $L_\delta^\pm =\chi_\pm^f \circ T_\delta\circ \rho^f$. 
	We saw in Proposition \ref{lifing the renormalized dynamics} above that the dynamics of the relative horn map $\rho \circ \chi_\pm$ are closely related to large iterates of $g$, the relationship is even clearer for Lavaurs maps:
	
	\begin{prop}\label{prop: approximating Lavaurs maps}
		Let  $X$ be a compact subset of the $k$-th section of $\Dom(\rho^f)$ and fix some $M>0$. If $g$ is close to $f$ and $|n- \emph{Re}(1/\alpha)|\leq  M$,  then $$\chi_\pm\circ T_{n-1/\alpha}\circ \rho = g^{nq+k}$$
		on $X$.
	\end{prop}
	
	\begin{proof}
		Let $m_0$ and $m_1$ be positive integers large enough so that 
		$$f^{m_0q+k}(X)\subset P_0^f \text{ and } \phi_{\pm1}^{-1}\circ T_{m-m_0}\circ\rho^f(X)\subset P_{\pm 1}^f$$
		for $m= M$ and $m=-M$.
		It then follows from the definitions of  $\rho$ and $\chi_\pm$ that
		\begin{align*}
			\chi_\pm\circ T_{n-1/\alpha}\circ \rho &= g^{m_1q}\circ \phi_1^{-1}\circ T_{-m_1}\circ T_{n-1/\alpha}\circ T_{-m_0}\circ \phi_0\circ g^{m_0q+k}\\
			&= g^{m_1q}\circ \phi_0^{-1}\circ T_{n-m_0-m_1}\circ \phi_0\circ g^{m_0q+k}\\
			& = g^{nq+k}
		\end{align*}
		on $X$ when $g$ is close to $f$.
	\end{proof}

	Proposition \ref{prop: approximating Lavaurs maps} implies that $g^{nq+k}\to L_\delta^\pm$ on the $k$-th section of $\Dom(\rho^f)$ when $g\to f$; this is the classical theorem of parabolic implosion from \cite{douady} and \cite{Lavaurs}.

	\section{Invariant classes}\label{sec:classes}
	
	Let us fix some integer $d\geq 2$ and consider the class ${\mathcal{F}}$ of all analytic maps $f: \Dom(f)\to \hat{\mathbb{C}}$ satisfying
	\begin{enumerate}
		\item $\Dom(f)$ is an open subset of ${\mathbb{C}}$ containing $\{0, \infty\}$;
		\item $f(0) = 0,$ $f(\infty) = \infty,$ and $ f'(0) = 1$; and
		\item the restriction 
		$f: f^{-1}(\mathbb{C}^*)\to \mathbb{C}^*$
		is a branched covering whose unique critical value is $1$, and all critical points 
		are of local degree $d$.
	\end{enumerate}
	For example, $\mathcal{F}$ contains the unicritical polynomial 
	$$G(z):= 1- \left(\frac{d-z}{d}\right)^d.$$

	Following \cite{CS_satellite}, for any analytic map $f$ satisfying $f'(0) = 1$ and any $w\in \mathbb{C}$ we denote by $f\rtimes w$ the map $z\mapsto f( \Exp(w)\cdot z)$. For any class of analytic maps $\mathcal{G}$ and $X\subset \C$ we similarly denote $\mathcal{G}\rtimes X:= \{g\rtimes x: g\in \mathcal{G}, x\in X\}$.
	
	Fixing now  some $f_0\in \mathcal{F}$ and reduced $p/q\in \Q$, we set $f=f_0\rtimes p/q$. Thus $cv^f:=\Exp(p/q)$ is the unique critical value of $f$.
	\begin{proposition}
		 The map $f$ has a non-degenerate $p/q$-parabolic fixed point at zero.
	\end{proposition}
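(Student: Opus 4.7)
I need to verify two things: (i) $h(z) = e^{2\pi i \mu_\omega(0)} z + O(z^2)$, and (ii) $h^q(z) = z + az^{q+1} + O(z^{q+2})$ with $a\neq 0$. Condition (i) is immediate from $f(0)=0$ and $f'(0)=1$. For (ii), first observe that $\mu_\omega(0) = p/q$ with $p=\mathfrak{p}(\omega)$, $q=\mathfrak{q}(\omega)$, and $\gcd(p,q)=1$ by proposition \ref{signature relationship}; hence $h'(0)$ is a primitive $q$-th root of unity and $(h^q)'(0)=1$. Since $h$ is not the identity, I may write $h^q(z)-z = a z^{\nu+1} + O(z^{\nu+2})$ with $\nu\geq 1$ and $a\neq 0$. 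Standard Leau-Fatou theory (or a direct symmetry argument: $h$ permutes the $\nu$ equally spaced attracting directions of $h^q$ while acting to first order as rotation by $2\pi p/q$) combined with $\gcd(p,q)=1$ forces $q\mid \nu$. Write $\nu = kq$; it remains to prove $k=1$.

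\textbf{Counting basin cycles.} By Leau-Fatou, $h^q$ has $\nu = kq$ attracting petals at $0$, and $h$ partitions them into $k$ cycles of length $q$ (the shift by $kp$ on $\mathbb{Z}/\nu\mathbb{Z}$ has orbits of length $q$ since $\gcd(p,q)=1$). Each $h$-cycle of petals sits inside a corresponding cycle $\mathcal{B}^{(i)} = B^{(i)}_0 \cup \dots \cup B^{(i)}_{q-1}$ of immediate parabolic basin Fatou components, for $i=1,\dots,k$, and these $k$ cycles are pairwise disjoint. Invoking the appropriate Fatou-type theorem for the branched covering $h: h^{-1}(\mathbb{C}^*)\to \mathbb{C}^*$, each $\mathcal{B}^{(i)}$ must contain a critical point $c_i$ of $h$.

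\textbf{Collapsing the cycles via unique critical value.} Now I use that all critical points of $h$ share the single critical value $cv_h = e^{2\pi i p/q} cv_f$. If $i\neq i'$ with $c_i \in B^{(i)}_{j_i}$ and $c_{i'}\in B^{(i')}_{j_{i'}}$, then the common image $cv_h = h(c_i) = h(c_{i'})$ lies simultaneously in $B^{(i)}_{j_i+1 \bmod q}$ and in $B^{(i')}_{j_{i'}+1 \bmod q}$. Distinct Fatou components are disjoint, so these two components coincide, forcing the cycles $\mathcal{B}^{(i)}$ and $\mathcal{B}^{(i')}$ to agree, a contradiction. Hence $k=1$, so $\nu=q$, and the $\omega$-parabolic fixed point of $h$ at $0$ is non-degenerate.

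\textbf{Main obstacle.} The genuine difficulty is justifying the Fatou-type theorem used in the second step: maps in $\mathcal{F}$ need not extend to rational maps of $\hat{\mathbb{C}}$, so the classical statement does not apply verbatim. The natural remedy is either to rerun the standard Fatou argument using only the branched covering structure on $h^{-1}(\mathbb{C}^*)$ (building a contracting hyperbolic metric on a postulated critical-point-free basin cycle and contradicting the parabolic asymptotics at $0$), or to extract a polynomial-like restriction of $h$ on the union of a basin cycle with a small neighborhood of $0$ and invoke Douady-Hubbard. Once the Fatou-type statement is in hand, the unique-critical-value bookkeeping in the third step finishes the argument in one line.
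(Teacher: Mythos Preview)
Your argument is correct and is precisely the standard one the paper is invoking: the paper's proof is a one-line appeal to the fact that uniqueness of the critical value forces non-degeneracy, citing \cite{Milnor} and \cite{shishikura_1} for the details you have spelled out (the $q\mid\nu$ symmetry step, the Fatou-type critical-point-in-each-basin-cycle statement, and the collapsing via the common critical value). Your identification of the Fatou-type theorem for the branched cover $h:h^{-1}(\mathbb{C}^*)\to\mathbb{C}^*$ as the only nontrivial ingredient is exactly right, and the workaround you sketch (hyperbolic contraction on a hypothetically critical-point-free basin cycle) is the argument carried out in the cited references.
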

	
	\begin{proof}
		By construction $f'(0) = \Exp(p/q)$. The non-degeneracy of the parabolic fixed point follows from the uniqueness of the critical value, see for example  \cite[Theorem 10.15]{Milnor} or \cite[Lemma 4.5.2]{shishikura_1}.
	\end{proof}

	Fixing some flower  $(P_j^f)_{j\in \Z/2q\Z}$ for $f$, the \textit{parabolic basin} of $f$ is the set
	$$B^{f} = \bigcup_{m\geq 0}f^{-m}(P_{0}^{f}).$$
	The following proposition ensures that $B^f$ contains the unique critical value of $f$: 
	\begin{proposition}\label{universal cauliflower}
		The restriction of $f^q$ to $B_0^{f}$ is analytically conjugate to the restriction of $G$ to $B_0^G.$
	\end{proposition}
	\begin{proof}
		This result is classical, see for example \cite[Theorem 2.9]{lanford} ($B_0^f$ is guaranteed to be simply connected by \cite[Lemma 4.5.2]{shishikura_1}).
	\end{proof}

	Let $B_0^f$ denote the component of $B^f$ containing $P_0^f$. Relabeling the petals if necessary, we may assume that $cv^f\in B_0^f$. We normalize the Fatou coordinates for the flower by $(cv^f, 0)$ and let $H_\pm^f$ be the corresponding horn maps.
	With these choices, the horn maps and parabolic renormalizations of $f$ are uniquely defined. Moreover, the parabolic renormalizations all lie in $\FF\rtimes \C$:
	
	\begin{proposition}\label{invariance of f}
		For any $\delta\in \C$,  $\RR_\delta^\pm f\in \mathcal{F}\rtimes \pm\delta$.
	\end{proposition}
	
	\begin{proof}
		We know that $\RR_\delta^\pm f$ fixes zero and has derivative $\Exp_\pm(\delta)$ at the origin. So it suffices to show that $\RR_0 f$ is a branched covering map over $\C^*$ whose unique critical value is at one. 
		Writing $H_\pm = \rho\circ \chi_\pm$, it follows from the definition and normalization that $\rho$ is a covering map whose critical points lie in the grand orbit of $cv^f$, which is mapped to $\Z$. 
		It similarly follows from the definition that $\chi_\pm$ is a covering map over $B^f$ whose critical values all lie in the grand orbit of $cv^f$. 
		Hence $H_\pm$ is a covering map whose critical values all lie in $\Z$, which completes the proof.
	\end{proof}

	\begin{figure}
		\begin{center}
			\def\svgwidth{12.5cm}
			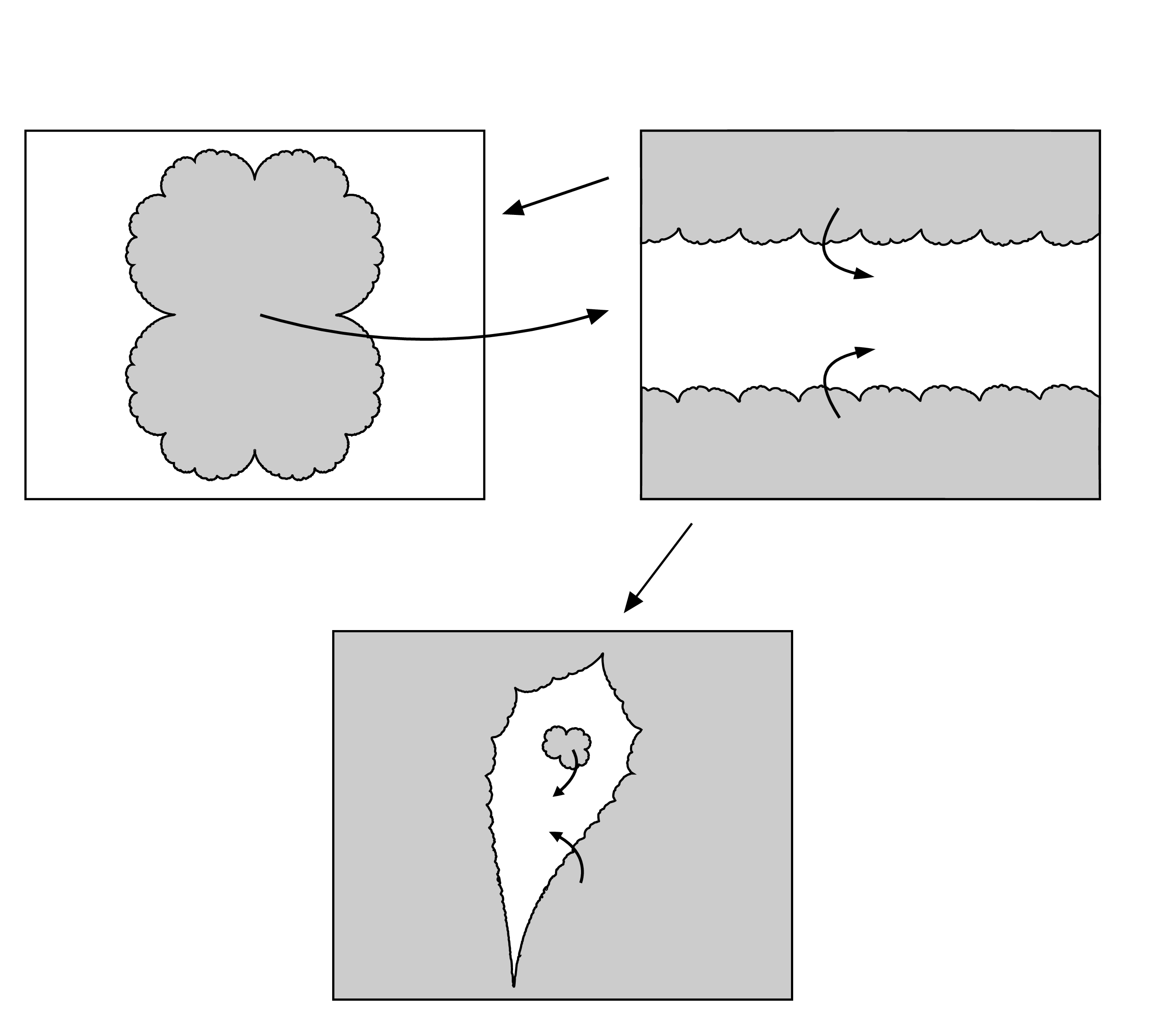
			\caption{The parabolic renormalization $\mathcal{R}_0^\pm G$ when $d = 2$.}
			\label{renormalization of G}
		\end{center}
	\end{figure}
	
	It follows from the Proposition \ref{invariance of f} that the identification $f\mapsto \RR_\delta^\pm f$ corresponds to a map $\FF\rtimes p/q\to \FF\rtimes \pm\delta$. To remove the action on the multiplier, similarly to  \cite{shishikura} we define the \textit{$p/q$-parabolic fiber renormalizations} of $f_0$ to be 
	$$\RR_{p/q, 0}^\pm f_0 :=\RR_0^\pm f = \Exp_\pm \circ H_\pm^f\circ (\Exp_\pm)^{-1}.$$
	The yields renormalization operators $\RR_{p/q, 0}^\pm:\FF\to \FF$, and the parabolic renormalization operators correspond to $(f_0, p/q)\mapsto (\RR_{p/q, 0}^\pm, \pm \delta).$

	The image of $\mathcal{R}_{p/q, 0}$ can actually be more precisely described. First we observe:
	\begin{prop}
		The components of $\Dom(\RR_0^\pm G)$ containing $0$ and $\infty$ are distinct and simply connected. 
	\end{prop}
	
	\begin{proof}
		The domain of $\RR_0^\pm G$ is exactly $\Exp_\pm\circ \chi_\pm^{-1}(B^G).$
		It follows from the maximum modulus principal that every component of $\chi_\pm^{-1}(B^f)$ is simply connected.  It follows from \cite{Orsay1} that an ``external ray" of $G$ lands at the parabolic fixed point, and consequently $\chi_\pm^{-1}(B^f)$ contains an infinite horizontal strip.
	\end{proof}
	
	\begin{remark}
		Actually one can show that the components of $\Dom(\RR_0^\pm G)$ are Jordan domains; see for example \cite{lanford}.
	\end{remark}
	
	Let $\Dom_0(\mathcal{R}_{0}^\pm G)$ be the connected component of $\Dom(\mathcal{R}_{ 0}^\pm G)$ containing $0$ (see Figure \ref{renormalization of G}); by the Riemann mapping theorem there exist unique univalent maps $$\varphi_\pm^G: \mathbb{D}\to \Dom_0(\mathcal{R}_{0}^\pm G)$$ satisfying $\varphi^G_\pm(0) = 0$ and $(\varphi_\pm^G)'(0) >0.$
	For $0 \leq \epsilon < 1$, let $\mathcal{S}_\epsilon$ be the set of univalent maps $\varphi:\mathbb{D}_{1-\epsilon}\to \mathbb{C}$ satisfying $\varphi(0) =0$ and $\varphi'(0)=1/ (\varphi_G^\pm)'(0) $. Following \cite{Cheritat22}, we define the class of maps 
	$$\mathcal{F}_\epsilon^\pm:= \{(\mathcal{R}_{0}^\pm G)\circ \varphi_G\circ \varphi^{-1}: \varphi\in \mathcal{S}_\epsilon\}.$$
	It follows from the Koebe distortion theorem that $\mathcal{S}_\epsilon$, and consequently $\mathcal{F}_\epsilon^\pm$, is compact with respect to the compact-open topology for all $0 \leq \epsilon < 1$. For any classes  $\mathcal{G}, \mathcal{G}'$ of analytic maps let us write $\mathcal{G}\sqsubset \mathcal{G}'$ if every map in $\mathcal{G}$ has a restriction in $\mathcal{G}'$. In particular, $\mathcal{F}_{\epsilon'}^\pm\sqsubset \mathcal{F}_{\epsilon}^\pm$ for any $0 \leq \epsilon' \leq \epsilon$.
	Let us also denote $\FF_\epsilon^* = \FF_\epsilon^+\cup \FF_\epsilon^-$.

	\begin{proposition}\label{weak parabolic invariance}
		For every $p/q$, $\mathcal{R}_{p/q, 0}^\pm(\mathcal{F})\sqsubset \mathcal{F}_0^\pm$.
	\end{proposition}
	
	\begin{proof}
		We only sketch the proof here (compare \cite[Theorem 13]{Cheritat22}).
		Let  $W^G$ and $W^f$ be the lifts of $\Dom_0^\pm(\RR_0^\pm G)$ and $\Dom_0^\pm(\RR_0^\pm f)$ respectively for  some $f\in \FF\rtimes p/q$. 
		An analytic isomorphism $B^G\to B^f$ as in Proposition \ref{universal cauliflower} lifts to an analytic isomorphism $\tilde\varphi: W^G\to W^f$ that commutes with $T_1$ and satisfies $H_\pm^f= H_\pm^G \circ \tilde \varphi^{-1}$.
		The map $\tilde \varphi$ descends to an analytic isomorphism $\varphi: \Dom_0^\pm(\RR_0^\pm G)\to \Dom_0^\pm(\RR_0^\pm f)$ that satisfies $\RR_0^\pm f = (\RR_0^\pm G )\circ \varphi^{-1}$; the proposition immediately follows by the Riemann mapping theorem.
	\end{proof}

	While we have only canonical parabolic renormalization for maps in $f\in \FF\rtimes \Q$, note that if $h\approx f$ is another holomorphic map with a non-degenerate $p/q$-parabolic fixed point at zero with a unique critical value $cv^h$ close to $cv^f$, we can similarly normalize the Fatou coordinates by $f$ by $(cv^h, 0).$ 
	Hence for any $p/q\in \Q$, if $\epsilon$ is sufficiently small then the parabolic fiber renormalization $\RR_{p/q, 0}$ is defined on $\FF_\epsilon^*$. One key feature of parabolic renormalization, first established by Inou and Shishikura \cite{shishikura} in the $p/q=0/1$ and $d= 2$ case, and then by Yang \cite{Yang}  in the $d= 3$ case and by Ch\'eritat \cite{Cheritat22} in the $d\geq 2$ cases (both with $p/q=0/1$), is that parabolic  fiber renormalization improves the regularity of maps in $\FF_\epsilon$:
	
	\begin{theorem}\label{parabolic invariance}
		For every  $p/q$, if $\epsilon>0$ is sufficiently small then there exists $0<\epsilon'<\epsilon$ satisfying
		$$\mathcal{R}_{p/q, 0}^\pm(\mathcal{F}_\epsilon^*)\sqsubset\mathcal{F}^\pm_{\epsilon'}.$$
	\end{theorem}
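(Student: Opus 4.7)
The plan is to follow the strategy Ch\'eritat uses in \cite{Cheritat22} for the case $\omega = \emptyset$ and verify that the argument survives the generalization. The entire apparatus from Section~\ref{parabolic implosion} --- petals, normalized Fatou coordinates, the horn map $H = H_h^h$, and their continuous dependence on $h$ --- was constructed in the non-degenerate $\omega$-parabolic setting for precisely this purpose, so each ingredient Ch\'eritat uses has a direct analog here.

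The first step is to reduce the statement to a cleaner one. I claim it suffices to prove $\mathcal{R}_{\omega,0}(\mathcal{F}_\delta) \subset \mathcal{F}_0$ for all sufficiently small $\delta$. Indeed, the inclusion $\mathcal{F}_0 \subset \mathcal{F}_{\delta'}$ holds for every $\delta' \in [0,1)$ by the abuse of notation established just before Proposition \ref{weak parabolic invariance}, so once $\mathcal{R}_{\omega,0}(\mathcal{F}_\delta) \subset \mathcal{F}_0$ is established, choosing $\delta' = \delta/2$ (which is automatically in $(0,\delta)$) gives the theorem.

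The second and main step is to show $\mathcal{R}_{\omega,0} f \in \mathcal{F}_0$ for $f \in \mathcal{F}_\delta$. Fix such an $f$ and set $h = e^{2\pi i\mu_\omega(0)}f$. By the definition of $\mathcal{F}_\delta$, $f = \mathcal{R}_{\emptyset,0}G \circ \varphi_G \circ \varphi^{-1}$ for some $\varphi \in \mathcal{S}_\delta$, so $f$ genuinely extends to a branched covering \emph{model} that behaves like a map in $\mathcal{F}$ away from the disk $\mathbb{D}_{(1-\delta)R_G}$ on which $\varphi$ is known to be univalent. The approach is to build a quasi-regular extension $\widetilde{h} \in \mathcal{F}$ by extending $\varphi$ across the circle $\partial \mathbb{D}_{(1-\delta)R_G}$ quasi-conformally while preserving the critical-value normalization, apply Proposition \ref{weak parabolic invariance} to $\widetilde{h} \in \mathcal{F}$ to obtain $\mathcal{R}_{\omega,0}\widetilde{h} \in \mathcal{F}_0$, and then straighten the resulting Beltrami coefficient via the measurable Riemann mapping theorem to produce a genuinely holomorphic map in $\mathcal{F}_0$ which agrees with $\mathcal{R}_{\omega,0} f$ on its natural domain $\{0,\infty\} \cup \mathrm{Exp}(\chi_h^{-1}(B_0^h))$. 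Compactness of $\mathcal{F}_\delta$ (Koebe distortion) together with the continuity of $h \mapsto (\phi_0,\phi_1,H,\rho_h,\chi_h)$ established in Section~\ref{parabolic implosion} then keeps all estimates uniform in $f \in \mathcal{F}_\delta$.

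The main obstacle is the surgery in the second step. For $\omega = \emptyset$ Ch\'eritat can build the quasi-regular extension essentially by a single interpolation in an annulus because the local dynamics near $0$ is a single $1$-parabolic germ; for general $\omega \in \mathcal{Q}$ the local picture involves $q$ attracting and $q$ repelling petals cyclically permuted by $h$ (Proposition \ref{perturbed petals} and \ref{eventual containment}), and the single horn map $H_1$ we use encodes not only the first-return to $P_1$ but also the visits to $P_0,P_2,\dots,P_{2q-2}$ that appeared in the proof of Proposition \ref{difference in Fatou}. The delicate point is to verify that the extended map $\widetilde{h}$ still has a non-degenerate $\omega$-parabolic fixed point at $0$ (not, say, a degenerate one or an $\omega'$-parabolic one for some $\omega' \neq \omega$), so that $\mathcal{R}_{\omega,0}\widetilde{h}$ really is the continuation of $\mathcal{R}_{\omega,0} f$ rather than of a different operator. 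Once the rotation data is shown to be preserved under the surgery, the straightening and the uniformization in $\mathcal{F}_0$ go through exactly as in \cite{Cheritat22}.
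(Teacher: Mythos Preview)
Your reduction in the first step is sound, but the surgery you sketch in the second step does not deliver what you claim, and the approach is not the one the paper uses.

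The concrete problem is the sentence ``apply Proposition~\ref{weak parabolic invariance} to $\widetilde h\in\mathcal F$.'' The map $\widetilde h$ you build by extending $\varphi$ quasi-conformally across $\partial\mathbb D_{(1-\delta)R_G}$ is only quasi-regular, so it is \emph{not} in $\mathcal F$ and Proposition~\ref{weak parabolic invariance} does not apply to it. If instead you first straighten $\widetilde h$ to a holomorphic $h'\in\mathcal F$ and then renormalize, you do get $\mathcal R_{\omega,0}(e^{-2\pi i\mu_\omega(0)}h')\in\mathcal F_0$, but now $h'$ is only \emph{quasi-conformally} conjugate to $h$, and the induced map between $\mathcal R_{\omega,0}h'$ and $\mathcal R_{\omega,0}h$ is likewise only quasi-conformal. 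Since the classes $\mathcal F_{\delta'}$ are defined via \emph{conformal} uniformization, a qc-conjugacy to something in $\mathcal F_0$ does not place $\mathcal R_{\omega,0}f$ in any $\mathcal F_{\delta'}$. To close this gap you would need to know that the straightening map is conformal on the part of the dynamical plane that the horn map actually sees --- equivalently, that the relevant orbits of $h$ never visit the annulus where the Beltrami coefficient is supported. Establishing exactly that is the content of Ch\'eritat's contraction step, and it is not a formality: it requires the comparison of hyperbolic metrics and the finiteness statement about components of $h^{-1}(\mathbb C^*\setminus\partial\mathbb D)$ meeting the immediate basin. Your proposal invokes compactness and continuity, but these only give uniform bounds, not the structural fact that the basin stays well inside $Dom_0(h)$.

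For comparison, the paper follows Ch\'eritat's two-step scheme directly. The first step is a contraction: one shows that the restriction of $\mathcal R_{\omega,0}f$ to $\mathcal F_{\delta'}$ depends only on the restriction of $f$ to $\mathcal F_\delta$, by comparing the hyperbolic metric on $Dom_0(\mathcal R_{\omega,0}f)$, the hyperbolic metric on $B_0^h$, the box-Euclidean metric on $Dom_0(h)$, and the hyperbolic metric on $Dom_0(h)$, together with the fact that $\bigcup_{n=0}^{q-1}h^n(B_0^h)$ meets only finitely many components of $h^{-1}(\mathbb C^*\setminus\partial\mathbb D)$. The second step is a perturbation: one shows $f\in\mathcal F_\delta$ can be perturbed to (a restriction of) a map in $\mathcal F_0$ without moving the orbits of $h$ far enough to affect the part of the renormalization already controlled by step one. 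The ``obstacle'' you identify --- checking that the $\omega$-parabolic type survives --- is in fact trivial, since any modification away from $0$ leaves the germ at $0$ untouched; the genuine difficulty lies in the metric contraction, which your outline bypasses.
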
 
	
	\begin{proof}
		For $p/q = 0/1$, this theorem is the main result in \cite{Cheritat22}. For general $p/q$ the same argument can be applied; we will only consider the two main steps and observe that the same reasoning applies. For details we refer the reader to \cite{Cheritat22}.
		
		Fixing some $f_0$ in $\FF^*_0$ and $f = f_0\rtimes p/q$, the first part of the argument in \cite{Cheritat22} is a contraction: showing that for any small $\epsilon>0$ there exits some $0< \epsilon'\ll \epsilon$ such that
		the restriction of $\RR_{p/q, 0}^\pm f_0 = \RR_0^\pm f$ to $\FF_{\epsilon'}^\pm$ depends only on the restriction of $f_0$ to $\FF_\epsilon^*$. Proving this fact requires showing that $B_0^f$ intersects only finitely many connected components of $X = f^{-1}(\C^*\sm \dd\D)$ and comparing the metrics:
		\begin{itemize}
			\item The hyperbolic metric on $\Dom_0(\RR_0 f).$
			\item The hyperbolic metric on $B_0^f$.
			\item  The \textit{box-Euclidean} metric on $\Dom_0(f)$, which is the lift of the flat metric on $\C^*$ by $f$.
			\item The hyperbolic metric on $\Dom_0(f)$.
		\end{itemize}
		For general $p/q$, we must consider 
		$\bigcup_{n=0}^{q-1}f^n(B_0^f)$ instead of just $B_0^f$. In \cite{Cheritat22}, showing that $B_0^f$ intersects only finitely many components of $X$ follows from studying the geometry of $(\rho^f)^{-1}(\R_{\geq -1})$; the same analysis applies for $\bigcup_{n=0}^{q-1}f^n(B_0^f)$. The comparison of the four metrics above is identical in the general $p/q$ setting. 
		
		The second part of the argument in \cite{Cheritat22} is a perturbation: showing that for any small $\epsilon$,  there is a homeomorphism $\FF_0^\pm\to \FF_\epsilon^\pm$ that does not move the  
		orbits of $f$ that induce  $\RR_0^\pm f$  very far. This part of the argument is unchanged in the general $p/q$ case.
	\end{proof}

	Fixing again some $f_0= \FF$ and $f = f_0\rtimes p/q$, 
	for some $t_0>0$ and $\alpha\in A_{t_0}$ we set $g = f_0\rtimes \mu_{p/q}^\pm(\alpha)$. 
	The map $g$ has a unique critical value $cv^g$, if $g$ is sufficiently close to $f$ then we normalize the Fatou coordinates for $g$ relative to $f$ by $(cv^g, 0)$. 
	This gives us a canonical normalization for horn maps $H_\pm^{g, f}$ of $g$ relative to $f$; while these horn maps are not unique we can choose them to depend continuously and holomorphically on $g$.
	Fixing such a choice of horn maps, we can define a \textit{$(p/q, \alpha, \pm)$-near-parabolic fiber renormalization} of $f_0$ to be 
	$$\RR_{p/q, \alpha}^\pm f_0 = (\RR_f^\pm g)\rtimes \pm1/\alpha = \Exp_\pm\circ H_\pm^{g, f}\circ (\Exp_\pm)^{-1}.$$
	Fixing holomorphic choices of horn maps near each $f\in \FF_0^*\rtimes p/q$ yields a holomorphic operators $\RR_{p/q, \alpha}^\pm$ defined on $\FF_\epsilon^\pm$ for all sufficiently small $\epsilon\geq 0$ and $\alpha\in A_{t_0}$.
	Note that while  the near-parabolic fiber renormalization operators are not uniquely defined, they are all related by Proposition \ref{prop:eventually the same renormalizations}.

	The improvement of regularity for the parabolic fiber renormalization operators implies the same for the near-parabolic fiber renormalizations:
   	 \begin{theorem}\label{near-parabolic invariance}
   	 	For any $p/q$, $t_0$, and choice of $\mathcal{R}_{p/q, \alpha}^\pm$, if $\epsilon>0$ is sufficiently small then there exist $r>0$ and $0 < \epsilon' < \epsilon$ such that 
   	 	$$\mathcal{R}_{p/q, \alpha}(\mathcal{F}_\epsilon^*)\subset \mathcal{F}_{\epsilon'}^\pm$$
   	 	for all $\alpha \in A_{t_0}\cap \D_r$.
   	 \end{theorem}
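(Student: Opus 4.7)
The plan is to leverage Theorem \ref{parabolic invariance} as a base case and obtain the perturbed statement by a uniform continuity argument in $\alpha$, exploiting the compactness of $\mathcal{F}_\delta$ and the ``room'' built into the definition of the classes $\mathcal{F}_{\delta'}$. First I would apply Theorem \ref{parabolic invariance} to fix $\delta_0 \in (0, \delta)$ with $\mathcal{R}_{\omega, 0}(\mathcal{F}_\delta) \subset \mathcal{F}_{\delta_0}$, and then choose any $\delta'$ with $\delta_0 < \delta' < \delta$. The strategy is to show that for $f \in \mathcal{F}_\delta$ and $\alpha \in A_r$ with $r$ small, the map $\mathcal{R}_{\omega, \alpha} f$ is so close to $\mathcal{R}_{\omega, 0} f$ on $\overline{\mathbb{D}_{(1-\delta')R_G}}$ that the corresponding $\varphi_\alpha$ still belongs to $\mathcal{S}_{\delta'}$.

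The core step is a continuity statement: the map $(\alpha, f) \mapsto \mathcal{R}_{\omega, \alpha} f$ is continuous at $\alpha = 0$ for the compact-open topology, uniformly in $f \in \mathcal{F}_\delta$. For each fixed $f$ this is the content of Propositions \ref{normalizing fatou coordinates} and \ref{ends of the horn map} together with Proposition \ref{uniform petals}, which provide canonical petals and Fatou coordinates depending holomorphically on $h_\alpha$ near $h_0$; the horn map $H_{h_\alpha}^{h_0}$ extends across the upper end $\mathrm{Im}\, w \to +\infty$ with limiting difference $\tfrac{1-\mathcal{E}(\omega)}{2}$ by Proposition \ref{ends of the horn map}. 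Conjugating by $\mathrm{Exp}$ this forces $\mathcal{R}_{\omega, \alpha} f$ to extend analytically to a punctured neighborhood of $0$, with removable singularity at $0$, converging locally uniformly to $\mathcal{R}_{\omega, 0} f$ as $\alpha \to 0$. Uniformity in $f \in \mathcal{F}_\delta$ is obtained from the compactness of $\mathcal{F}_\delta$ in the compact-open topology (via Koebe) combined with Proposition \ref{uniform petals}, which gives constants $r, m_0, M$ that do not depend on $f$.

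Once this uniform continuity is in place, I would translate it into the required inclusion as follows. By Theorem \ref{parabolic invariance}, for each $f \in \mathcal{F}_\delta$ we may write
\[
\mathcal{R}_{\omega, 0} f = \mathcal{R}_{\emptyset, 0} G \circ \varphi_G \circ \varphi_0^{-1}, \qquad \varphi_0 \in \mathcal{S}_{\delta_0}.
\]
Since $\mathcal{R}_{\emptyset, 0} G \circ \varphi_G$ is a fixed holomorphic map with a single critical value at $1$, univalent from a neighborhood of $0$ to a neighborhood of $0$, the correspondence $\varphi \mapsto \mathcal{R}_{\emptyset, 0} G \circ \varphi_G \circ \varphi^{-1}$ is a homeomorphism from $\mathcal{S}_\delta$ onto $\mathcal{F}_\delta$ (as in the proof of Corollary \ref{contraction}). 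Hence closeness of $\mathcal{R}_{\omega, \alpha} f$ to $\mathcal{R}_{\omega, 0} f$ on $\overline{\mathbb{D}_{(1-\delta')R_G}}$ translates into closeness of the associated $\varphi_\alpha$ to $\varphi_0$. Because $\varphi_0 \in \mathcal{S}_{\delta_0}$ is defined and univalent on $\mathbb{D}_{(1-\delta_0) R_G}$, which compactly contains $\overline{\mathbb{D}_{(1-\delta') R_G}}$, Hurwitz's theorem together with Koebe distortion ensures that any sufficiently small perturbation of $\varphi_0$ remains univalent on $\mathbb{D}_{(1-\delta')R_G}$ and satisfies the normalization $\varphi_\alpha(0) = 0$, $\varphi_\alpha'(0) = 1$, so $\varphi_\alpha \in \mathcal{S}_{\delta'}$ and $\mathcal{R}_{\omega, \alpha} f \in \mathcal{F}_{\delta'}$.

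The main obstacle is the uniform convergence step. The delicate point is to ensure that for $\alpha \in A_r$ with $r$ small enough (independently of $f$), the domain of $\mathcal{R}_{\omega, \alpha} f$ contains a fixed disk $\mathbb{D}_{(1-\delta')R_G}$ near $0$, and that the values on this disk converge to those of $\mathcal{R}_{\omega, 0} f$. This requires combining Proposition \ref{uniform petals} (which kills the $f$-dependence of $r, m_0, M$), Proposition \ref{difference in Fatou} and Proposition \ref{near parabolic conjugacy} (which control how the horn map behaves at the upper end as $\alpha \to 0$, so that the asymptote persists under perturbation), and the compactness of $\mathcal{F}_\delta$. Once this uniform upper-end control is secured, the passage from $\alpha = 0$ to $\alpha \in A_r$ becomes a soft consequence of normal families, and the theorem follows.
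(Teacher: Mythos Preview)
Your strategy differs from the paper's. The paper does not argue by uniform continuity of $\alpha \mapsto \mathcal{R}_{\omega,\alpha}f$; instead it constructs, for each small $\alpha$, an explicit analytic map $\varphi$ satisfying $(\mathcal{R}_{\omega,\alpha}f)\circ\varphi = \mathcal{R}_{\omega,0}f$ on the restriction relevant to $\mathcal{F}_{\delta'}$. This $\varphi$ is built combinatorially: one cuts the attracting petal into finitely many ``puzzle pieces'' using $(\Exp\circ\phi_0)^{-1}(\Gamma_\epsilon)$ with $\Gamma_\epsilon=\partial\mathbb{D}_\epsilon\cup\partial\mathbb{D}_{1/\epsilon}\cup\mathbb{R}_{>0}$, pulls these back to ``pre-puzzle pieces'' in the repelling petal, and identifies the $\alpha\neq 0$ pieces with the $\alpha=0$ pieces via their common image in $\mathbb{C}^*\setminus\Gamma_\epsilon$. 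The assembled $\varphi$ is univalent and close to the identity, so composing it with the $\varphi_0$ for $\mathcal{R}_{\omega,0}f$ immediately gives a $\varphi_\alpha\in\mathcal{S}_{\delta'}$. Compactness of $\mathcal{F}_\delta$ and of $[-\tfrac{\pi}{13},\tfrac{\pi}{13}]$ then makes the bound on $\alpha$ uniform.

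Your argument has a genuine gap at the step where you pass from ``$\mathcal{R}_{\omega,\alpha}f$ is close to $\mathcal{R}_{\omega,0}f$'' to ``there exists $\varphi_\alpha\in\mathcal{S}_{\delta'}$ with $\mathcal{R}_{\omega,\alpha}f = \mathcal{R}_{\emptyset,0}G\circ\varphi_G\circ\varphi_\alpha^{-1}$''. The correspondence $\mathcal{S}_\delta\to\mathcal{F}_\delta$ you invoke is a bijection onto its \emph{image}, not onto an open set in the space of analytic germs; the classes $\mathcal{F}_\delta$ form a thin family cut out by the structural condition of factoring through the fixed model $\mathcal{R}_{\emptyset,0}G\circ\varphi_G$. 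A generic small perturbation of a map in $\mathcal{F}_{\delta_0}$ lies in no $\mathcal{F}_{\delta'}$ whatsoever, so closeness alone cannot produce $\varphi_\alpha$. Before Hurwitz and Koebe can be used to compare $\varphi_\alpha$ with $\varphi_0$, you must first \emph{construct} $\varphi_\alpha$. That requires either establishing that $\mathcal{R}_{\omega,\alpha}f$ has the correct branched-covering structure over $\mathbb{C}^*$ (unique critical value $1$, all critical points of local degree $d$) so that a lift through the model map exists by analytic continuation as in Proposition~\ref{weak parabolic invariance}, or else building the conjugacy $\varphi$ directly from the dynamics as the paper does with its puzzle pieces. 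Your sketch supplies neither, and the soft continuity argument cannot close this gap on its own.
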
 
   	 
   	 \begin{proof}
   	 	This follows from Theorem \ref{parabolic invariance} and the convergence $\RR_{p/q, \alpha}^\pm\to \RR_{p/q, 0}^\pm$ when $\alpha\to 0$.
   	 \end{proof}
   	 
	The following fact from \cite{shishikura} shows that the parabolic and near-parabolic fiber renormalization operators are contracting on $\mathcal{F}_\epsilon^*$:
	\begin{proposition}\label{contraction}
		For any $0 < \epsilon' < \epsilon<1$ there exists complete metrics $\mathfrak{d}^\pm$ on $\mathcal{F}_{\epsilon'}^\pm$ such that if $\mathcal{R}^\pm: \mathcal{F}_\epsilon^\pm\to \mathcal{F}_{\epsilon'}^\pm$ are holomorphic operators, then
		$$\mathfrak{d}^\pm(\mathcal{R}^\pm(f_1), \mathcal{R}^\pm(f_2))< \frac{1-\epsilon}{1-\epsilon'}\mathfrak{d}^\pm(f_1, f_2)$$
		for all $f_1, f_2\in \mathcal{F}_{\epsilon'}^\pm$.
		Moreover, convergence in this metric implies convergence in the compact-open topology.
	\end{proposition}
	\begin{proof}
		See \cite[Main Theorem 2]{shishikura}.
	\end{proof}
	
	As the map $x\mapsto - 1/(\mu_{p/q}^\pm)^{-1}(x)$ is expanding near $x = p/q$ and Proposition \ref{contraction} implies that $f_0\mapsto \RR_{p/q, \alpha}^\pm f_0$ is contracting, it follows that 
	the near-parabolic renormalization operators	
	$$
		\RR_{p/q}^\pm: (f_0, \mu_{p/q}^\pm(\alpha))\mapsto (\RR_{p/q, \alpha}^\pm f_0, \mp1/\alpha)
	$$
	are hyperbolic; for a more detailed discussion of hyperbolicity of near-parabolic renormalization operators  we refer the reader to \cite{shishikura} and \cite{CS_satellite}.
	
	\section{Relating renormalizations}\label{sec:repeated}
	
	Now let us consider a map that has a fixed point with multiplier close to two distinct roots of unity. 
	These different roots of unity may give two different parabolic renormalizations,  in this section we show how to relate these renormalizations.

	Let $\kappa$, $\kappa_0$, and $\kappa_1$ be modified continued fractions such that 
	\begin{equation*}
		\mu_\kappa(z) = \mu_{\kappa_0}\left(\frac{1}{n+ \epsilon\mu_{\kappa_1}(z)}\right).
	\end{equation*}
	for some $n\geq 2$ and $\epsilon = \pm 1$.
	Let $p_0/q_0$, $p_1/q_1$, and $p/q$ be the rational numbers with modified continued fraction expansions $\kappa_0$, $\kappa_1$, and $\kappa$ respectively. 
	Thus there are some $s_0, s_1, s\in \{\pm\}$ such that $\mu_{\kappa_0}= \mu_{p_0/q_0}^{s_0}$,
	$\mu_{\kappa_1}= \mu_{p_1/q_1}^{s_1}$, 
	and $\mu_{\kappa}= \mu_{p/q}^{s}$.
	Identifying  $\pm$ with $\pm 1$, it follows from the construction that $s = -\epsilon s_0s_1$. 
	For the rest of this section we keep $\kappa_0$ and $\kappa_1$ fixed, allowing $p/q$ to vary by sending $n\to \infty$ and choosing $\epsilon$ so that $s = s_1$.
	Note in particular that $p/q\to p_0/q_0$ when $n\to \infty$.
	Using Proposition \ref{prop:mobius form}, we can compute 
	\begin{equation}\label{eq: q}
		q = nq_0q_1+\epsilon p_1q_0+ q_0'q_1,
	\end{equation}
	where $0\leq q_0'< q_1$ is the integer satisfying $p_0q_0' + s_0 \equiv 0 \mod q_0$.

	Let $f_0$ be an analytic map with a non-degenerate $p_0/q_0$-parabolic fixed point at zero and let $(P_j^{f_0}, \phi_j^{f_0})_{j\in \Z/2q_0\Z}$ be a parabolic flower for $f_0$.
	We set $\delta_0 = s_0p_1/q_1$ and $f_1 = \RR^{s_0}_{\delta_0}f_0$.
	Thus $f_1$ has a $p_1/q_1$-parabolic fixed point at zero; we assume that this parabolic point is also non-degenerate.
	 Let $(P_j^{f_1}, \phi_j^{f_0})_{j\in \Z/2q_1\Z}$ be a parabolic flower for $f_1$.
	
	Let $g_0$ be an analytic map with a $p/q$-parabolic fixed point at zero
	and let $(P_j^{g_0}, \phi_j^{g_0})_{j\in \Z/2q\Z}$ be a parabolic flower for $g_0$.
	If $g_0$ is sufficiently 	
	close to $f_0$, then a near-parabolic renormalization $g_1=\RR_{f_0}^{s_0}g_0$ is defined and close to $f_1$. As $$ g_1'(0)= \Exp_{s_0}(-\epsilon p_1/q_1) = \Exp(p_1/q_1),$$  $g_1$ also has a non-degenerate $p_1/q_1$-parabolic fixed point at zero when $g_1$ is close to $f_1$.  Let $(P_j^{g_1}, \phi_j^{g_1})_{j\in \Z/2q_1\Z}$ be a parabolic flower for $g_1$.
	
	Fixing some $t_0>0$, let $h_0$ be an analytic map such that
	$$h_0(0) = 0\text{ and }h_0'(0) =\Exp\circ \mu_{\kappa}(\alpha)$$ for some $\alpha\in A_{t_0}$. 
	We set $\alpha_1 = \alpha$ and $\alpha_0 = \frac{1}{n+\epsilon \mu_{\kappa_1}(\alpha)}$, so $\mu_{\kappa}(\alpha) = \mu_{\kappa_0}(\alpha_0).$
	If $h_0$ is close to $g_0$, then there is a near-parabolic flower   $(P_j^{h_0, g_0}, \phi_j^{h_0, g_0})_{j\in \Z/2q\Z}$ near zero for $h_0$ relative to $g_0$ with corresponding near-parabolic renormalization
	$\RR_{g_0}h_0$.
	As $g_0$ is close to $f_0$,
	there is also a near-parabolic flower $(P_j^{h_0, f_0}, \phi_j^{h_0, f_0})_{j\in \Z/2q_0\Z}$ near zero for $h_0$ relative to $f_0$ with corresponding near-parabolic renormalization $h_1= \RR_{f_0}h_0$.
	Note that 
	$$h_1'(0) = \Exp_{s_0}(-\epsilon\mu_{\kappa_1}(\alpha)) = \Exp\circ \mu_{p_1/q_1}^{s_1}(\alpha).$$
	As $h_1$ is close to $g_1$, which is close to $f_1$, 
	there is a near-parabolic flower $(P_j^{h_1, f_1}, \phi_j^{h_1, f_1})_{j\in \Z/2q_1\Z}$ near zero for $h_1$ relative to $f_1$ with corresponding near-parabolic renormalization $h_2= \RR_{f_1}h_1$.
	
	We observe that we can directly relate the flowers for $h_0$ and $h_1$ (see Figure \ref{fig:lifts}):
	
		\begin{figure}
		\begin{center}
			\def\svgwidth{12.5cm}
\begingroup%
  \makeatletter%
  \providecommand\color[2][]{%
    \errmessage{(Inkscape) Color is used for the text in Inkscape, but the package 'color.sty' is not loaded}%
    \renewcommand\color[2][]{}%
  }%
  \providecommand\transparent[1]{%
    \errmessage{(Inkscape) Transparency is used (non-zero) for the text in Inkscape, but the package 'transparent.sty' is not loaded}%
    \renewcommand\transparent[1]{}%
  }%
  \providecommand\rotatebox[2]{#2}%
  \newcommand*\fsize{\dimexpr\f@size pt\relax}%
  \newcommand*\lineheight[1]{\fontsize{\fsize}{#1\fsize}\selectfont}%
  \ifx\svgwidth\undefined%
    \setlength{\unitlength}{1092.85997286bp}%
    \ifx\svgscale\undefined%
      \relax%
    \else%
      \setlength{\unitlength}{\unitlength * \real{\svgscale}}%
    \fi%
  \else%
    \setlength{\unitlength}{\svgwidth}%
  \fi%
  \global\let\svgwidth\undefined%
  \global\let\svgscale\undefined%
  \makeatother%
  \begin{picture}(1,0.82541135)%
    \lineheight{1}%
    \setlength\tabcolsep{0pt}%
    \put(0,0){\includegraphics[width=\unitlength]{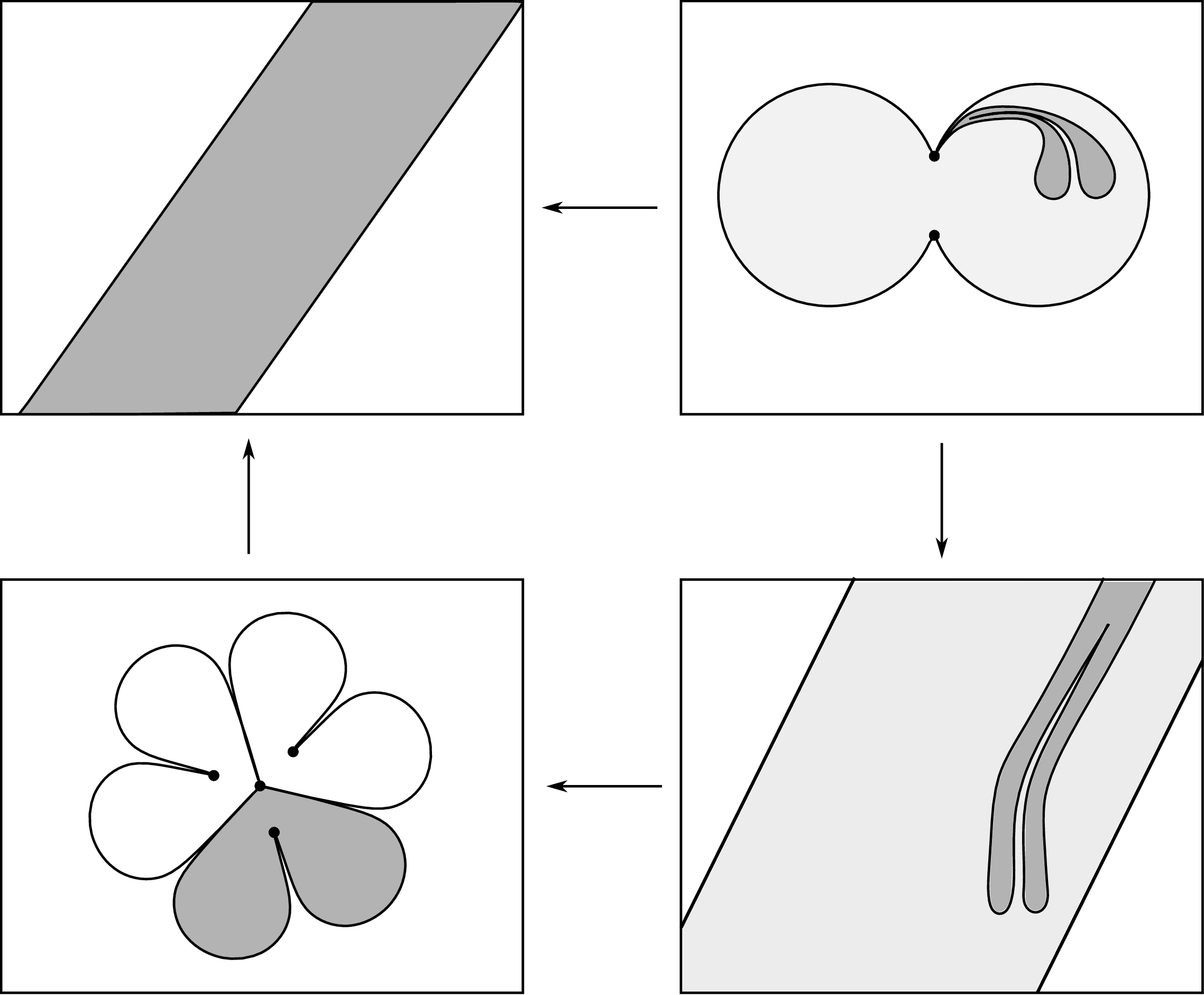}}%
    \put(0.23353261,0.40572011){\makebox(0,0)[lt]{\lineheight{1.25}\smash{\begin{tabular}[t]{l}$\phi_0^{h_1, f_1}$\end{tabular}}}}%
    \put(0.81109957,0.40572011){\makebox(0,0)[lt]{\lineheight{1.25}\smash{\begin{tabular}[t]{l}$\phi_0^{h_0, f_0}$\end{tabular}}}}%
    \put(0.46714289,0.67379753){\makebox(0,0)[lt]{\lineheight{1.25}\smash{\begin{tabular}[t]{l}${\phi}_0^{h_0, g_0}$\end{tabular}}}}%
    \put(0.46334174,0.19246748){\makebox(0,0)[lt]{\lineheight{1.25}\smash{\begin{tabular}[t]{l}$\Exp_{s_0}$\end{tabular}}}}%
    \put(0.3264579,0.04311435){\makebox(0,0)[lt]{\lineheight{1.25}\smash{\begin{tabular}[t]{l}$P_0^{h_1, f_1}$\end{tabular}}}}%
    \put(0.84737643,0.62428889){\makebox(0,0)[lt]{\lineheight{1.25}\smash{\begin{tabular}[t]{l}$P_0^{h_0, g_0}$\end{tabular}}}}%
    \put(0.60831207,0.76547894){\makebox(0,0)[lt]{\lineheight{1.25}\smash{\begin{tabular}[t]{l}$P_0^{h_0, f_0}$\end{tabular}}}}%
  \end{picture}%
\endgroup%

			\caption{The relationship between flowers as in Proposition \ref{prop:comparing petals}. Note that the petals should be spiraling around the fixed points; we omit this detail for clarity of the image.}
			\label{fig:lifts}
		\end{center}
	\end{figure}
	
	\begin{prop}\label{prop:comparing petals}
		We can choose the flowers  so that $\Exp_{s_0}\circ \phi_0^{h_0, f_0}$ maps $P_0^{h_0, g_0}$ to $P_0^{h_1, f_1}$, conjugates $h_0^q$ to $h_1^{q_1}$ there, and satisfies $$\phi_0^{h_0, g_0} = \phi_1^{h_1, f_1}\circ \Exp_{s_0}\circ \phi_0^{h_0, f_0}.$$
	\end{prop}
	\begin{proof}
		Setting $n_0= n$, note that we have that $\text{Im}\alpha_0/\text{Re}\alpha_0\to 0$ when $n\to \infty$. 
		It therefore follows from Proposition \ref{prop:tilt} that we can choose the tilt of 
		the flowers fo that there is some set $X$ of exiting points in $P_0^{h_0, f_0}$ such that $Y = \phi_0^{h_0, f_0}(X)$ is mapped univalently to $P_0^{h_1, f_1}$ by $\Exp_{s_0}$.
		
		Let $H^{h_0, f_0}$ be the horn map for $h_0$ relative to $f_0$ corresponding to the choice of flower, so $H^{h_0, f_0}(w) - w\to 0$ when $\text{Im}\,w\to s_0\infty$.
		As $h_1^{q_1}$ maps most of $P_{0}^{h_1, f_1}$ to itself and zero is on the boundary of $P_0^{h_1, f_1}$, it follows that there 
		$$w\mapsto(H^{h_0, f_0}\circ T_{n_0-1/\alpha_0})^{q_1}\circ T_{\epsilon p_1}(w) \approx w$$
		maps most of $Y$ to itself and is conjugated to $h_1^{q_1}$ by $\Exp_{s_0}$.
		As $X$ is exiting $P_0^{h_0, f_0}$, it then follows from Proposition \ref{lifing the renormalized dynamics} and \eqref{eq: q} that $h_0^q$ maps most of $X$ to itself and is conjugated to $h_1^{q_1}$ by $\Exp_{s_0}\circ \phi_0^{h_0, f_0}.$
		Thus $X$ is a petal for $h_0^q$ with Fatou coordinate $\Exp_{s_0}\circ \phi_0^{h_0, f_0}$. 
		
		Setting $X_{-2pj}= h^{-j}(X)$ and $\varphi_{-2pj} = \Exp_{s_0}\circ \phi_0^{h_0, f_0}\circ h^j$ for all $0\leq j < q$,  
		$X_{j+s} = X_j$ and $\varphi_{j+s} = \varphi_j$ for all even $j$, the collection $(X_j, \varphi_j)$ is a near-parabolic flower for $h_0$ relative to $g_0$. 
		In particular, we can choose $(P_j^{h_0, g_0}, \phi_j^{h_0, g_0})= (X_j, \varphi_j)$.
	\end{proof}

	As a consequence of Proposition \ref{prop:comparing petals}, we can relate the near-parabolic renormalizations:
	
	\begin{theorem}\label{thm:comparison}
		For any compact $X\in \Dom(f_2)$, if $g_0$ is close to $f_0$ and $h_0$ is close  to $g_0$, then we can choose the  near-parabolic renormalizations so that 
		$h_2= \RR_{g_0}^sh_0$ on $X$.
	\end{theorem}

	\begin{proof}
		Let $H^{f_0}_{s_0}$ and $H^{f_1}_{s_1}$ be horn maps for $f_0$ and $f_1$ respectively. 
		Let $H^{h_0, g_0}$, $H^{h_0, f_0}$,  and $H^{h_1, f_1}$ be horn maps for $h_0$ relative to $g_0$, $h_0$ relative to $f_0$, and $h_1$ relative to $f_1$ respectively. As $\Exp_{s_1}= \Exp_{s}$ semi-conjugates $H^{h_1, f_1}\circ T_{-1/\alpha}$ and $H^{h_0, g_0}\circ T_{-1/\alpha}$ to $h_2$ and $\RR_{g_0}h_0$ respectively, the theorem is a consequence of the following  claim: for any compact $X\subset \Dom(H^{f_1}_{s_1})$, we can choose the horn maps so that 
		$H^{h_1, f_1}-H^{h_0, g_0}$ maps $X$ into $\Z$.
		
		Let $X_1\subset P_{s_1}^{f_1}$ be a compact set such that $f_1^{N_1}(X_1)\subset P_0^{f_1}$ 
		for some integer $N_1\geq 0$. 
		Thus $X_1\subset P_{s_1}^{h_1, f_1}$ and $h_1^{N_1}(X_1)$ is entering $P_0^{h_1, f_1}$ when $h_1$ is close to $f_1$. 
		Assuming the flowers are chosen as in Proposition \ref{prop:comparing petals}, let $X_0$ be the component of $(\Exp_{s_0}\circ \phi_0^{h_0, f_0})^{-1}(X_0)$ contained in $P_0^{h_0, g_0}$.
		Note that it follows from the proof of Proposition \ref{prop:comparing petals} that we can choose the petals so that $P_0^{h_0, g_0}$ is exiting $P_0^{h_0, f_0}$.

		We set $Y_0= \phi_0^{h_0, f_0}(X_0)$, so $\Exp_{s_0}(Y_0)= X_1$. 
		Setting $n_0 = n$, and 
		choosing some integers $j_m$ for all $0\leq m < m_1$, we inductively define $$Y_{m+1} = H^{h_0, f_0}\circ T_{n_0+j_m-1/\alpha_0} (Y_m).$$
		When $h_1$ is sufficiently close to $f_1$ and $h_0$ is sufficiently close to $f_0$, we can choose the integers $j_m$, depending only on $X_1$, so that each $(\phi_0^{h_0, f_0})^{-1}(Y_m)$ is defined and exiting $P_0^{h_0, f_0}$. 
		Moreover, as $h_1^{N_1}(X_1)$ is entering $P_0^{h_1, f_1}$, we can choose the integers so that $(\phi_0^{h_0, f_0})^{-1}(Y_{N_1})$ is entering $P_0^{h_0, g_0}$.
		It then follows from Proposition \ref{lifing the renormalized dynamics} that when $\alpha$ is sufficiently small there is some $$N_0\leq \sum_{m=0}^{m_1-1}(n_0+j_m+1)q_0< \text{Re}(1/3\alpha)q$$
		such that $h_0^{N_0}(X_0)$ is entering $P_0^{h_0, g_0}$
		and $\Exp_{s_0}\circ \phi_0^{h_0, f_0}\circ h_0^{N_0} = h_1^{N_1}$ on $X_0$. 
		Hence for any $z_0\in X_0$, $z_1 = \Exp_s\circ \phi_0^{h_0, f_0}(z_0)$, and $w= \phi_0^{h_0, g_0}(z_0) = \phi_0^{h_1, f_1}(z_1)$, we have that 
		$$H^{h_0, g_0}(w) \equiv \phi_0^{h_0, g_0}\circ h_0^{N_0}(z_0) = \phi_0^{h_1, f_1}\circ h_1^{N_1}(z_1)\equiv H^{h_1, f_1}(w)$$
		modulo $\Z$. For any compact set $X\subset \Dom(H^{f_1}_{s_1})$, we can choose $X_1$ so that $\phi_0^{h_1, f_1}(X_1)+m$ contains $X$ for some integer $m$, which completes the proof.
	\end{proof}
	
	In the context of the fiber renormalizations in the previous section, Theorem \ref{thm:comparison} has the following interpretation:
	\begin{cor}\label{cor:comparison}
		For any small $\epsilon>0$,  when $n$ is sufficiently large we can choose the near-parabolic renormalization operators so that
		$$\RR_{p_1/q_1, \alpha_1}^{s_1}\circ \RR_{p_0/q_0, \alpha_0}^{s_0} = \RR_{p/q, \alpha}^s$$
		on $\FF_\epsilon^*$.
	\end{cor}
	
	\begin{proof}
		Choosing $f_0$, $g_0$, and $h_0$ above so that $f_0 = f\rtimes p_0/q_0$,  $g_0 = f\rtimes p/q$, and $h_0 = f\rtimes \mu_\kappa(\alpha)$ for some $f\in \FF_\epsilon^*$, it follows from the definitions that
		we can choose the operators so that 
		$\RR_{p/q, \alpha}^{s} f= (\RR_{g_0}h_0)\rtimes s/\alpha$ and 
		$$\RR_{p_1/q_1, \alpha_1}^{s_1}\circ\RR_{p_0/q_0, \alpha_0}^{s_0} f = \RR_{p_1/q_1, \alpha_1}^{s_1} (h_1\rtimes -\mu_{p_1/q_1}^{s_1}(\alpha_1)) = (\RR_{h_1\rtimes (p_1/q_1-\mu_{p_1/q_1}^{s_1}(\alpha_1))}h_1)\rtimes s_1/\alpha_1.$$
		As $h_1\rtimes (p_1/q_1-\mu_{p_1/q_1}^{s_1}(\alpha_1))$ has a $p_1/q_1$-parabolic fixed point at zero and converges to $f_1$ when $n\to \infty$ and $\alpha\to 0$, it follows from Proposition \ref{prop:near-parabolic independence} that we can choose the operators so that 
		$$\RR_{p_1/q_1, \alpha_1}^{s_1}\circ\RR_{p_0/q_0, \alpha_0}^{s_0} f =( \RR_{f_1}h_1)\rtimes s_1/\alpha_1.$$
		As $\RR_{f_1}^{s_1}h_1 =  \RR_{g_0}^{s_0}h_0$ by Theorem  \ref{thm:comparison} and $s_1/\alpha_1= s/\alpha$, we are done.
	\end{proof}
	
	Using Corollary \ref{cor:comparison}, we can see that the constants in Theorem \ref{near-parabolic invariance} depend only on the length of the continued fraction.
	More precisely, we have:
	\begin{cor}\label{uniform renormalization}
		Fix some $n\geq 0$. For any $t_0$ and small $\epsilon>0$, there exists $r>0$ and $0 < \epsilon' < \epsilon$ such that: for any $p/q\in \Q_n$ and choice of $\RR_{p/q, \alpha}$, we have 
		$$\mathcal{R}_{p/q, \alpha}^\pm(\mathcal{F}_\epsilon^*)\subset \mathcal{F}_{\epsilon'}^\pm$$
		for all $\alpha \in A_{t_0}\cap \D_r$.
	\end{cor}

	A natural question which arises from Corollary \ref{uniform renormalization} is whether the constants can be chosen uniformly over all $p/q\in \Q$, without bounding the length of the modified continued fraction. We might suspect that the answer is yes, but an answer require understanding the behavior of  $\mathcal{R}_{p/q, 0}^\pm$ when $p/q$ tends toward an irrational number.

\appendix

	\section{The geometry of petals}

	In this appendix we consider the geometry of  flowers and near-parabolic flowers and provide  proofs that were omitted for some statements in Sections \ref{sec:parabolic} and \ref{sec:near-parabolic}. We fix some $p/q\in \Q$; when $p/q= 0/1$ all the statements here are given (with minor modification) in \cite{shishikura_1}, with some additional observations in \cite{positive_area}. We will not completely recreate the classical analysis, instead focusing on the necessary changes for the general setting. For more details, we refer the reader to \cite{shishikura_1}. Similar modifications  to generalize to the  $p/q$ setting are suggested in \cite[\S 7]{shishikura_boundary}; we provide more details than \cite[\S 7]{shishikura_boundary} to avoid the error discussed in Remark \ref{rem:error}.

	A \textit{sector} in $\C$ is a component of the complement of two intersecting lines. For sectors $S_1\subset S_2$, we will say that $S_1$ is \textit{well-inside} $S_2$ if none of the lines which form the boundaries of $S_1$ and $S_2$ are parallel. For any open set $U\subset \C$, we will say that a sector $S_1$ is well-inside $U$ if it is well-inside a sector $S_2\subset U$.

	For $s>0$ and $z_1, z_2\in \C$ satisfying $\text{Re}(z_1-z_2)\leq s\cdot |\text{Im}(z_1-z_2)|$, we define
	$$\QQ(z_1, z_2, s):= \{w\in \C: \text{Re}\,z_1- s\cdot|\text{Im}(w-z_1)|< \text{Re}\,w< \text{Re}\,z_2+s\cdot|\text{Im}(w-z_2)|\}.$$
	See for example Figure \ref{petal def fig}.
	We will also allow $z_1=-\infty$ or $z_2=+\infty$, in these cases we ignore the inequalities containing $z_1$ or $z_2$ respectively. 

	Our main tool for conjugating $f^q$ to $z\mapsto z+1$ is the following result from \cite{shishikura_1}:
	\begin{prop}\label{fatou coordinates}
		Fix some $s>0$ and let $f$ be a holomorphic function defined on some ${\QQ}={\QQ}(b_1, b_2, s)$ with $\emph{Re}\,b_2>\emph{Re}\,b_1+2.$ If $\epsilon>0$ is sufficiently small and if 
		$$|f(w)-(w+1)|< \epsilon\text{ and }|f'(w)-1|< \epsilon$$
		for all $w\in {\QQ}$, then:
		\begin{enumerate}
			\item $g$ is univalent on ${\QQ}$.
			\item Fix some  $x\in \mathbb{R}$ satisfying $\emph{Re}\, b_1< x< \emph{Re}\, b_2-1-\epsilon$ and set $\ell = \{x+iy:y\in \mathbb{R}\}$. For any $w\in \mathcal{Q}$ there exists a unique integer $n$ such that $f^n(w)$ belongs to the strip bounded by $\ell$ and $f(\ell).$
			\item There exists a univalent function $\Phi:{\QQ}\to \mathbb{C}$, unique up to post-composition by a translation, such that
			$$\Phi\circ f(w)= \Phi(w)+1$$
			wherever both sides of the equation are defined.
			\item Fix some point $w_0\in \mathcal{Q}$. If we normalize by $\Phi(w_0)=0$, then $\Phi$ depends continuously and holomorphically on $f$. 
			\item If $$f(w) = w+1+\frac{c}{w}+O\left(\frac{1}{|w|^{1+m}}\right)$$
			near $w=\infty$ for some $m>0$, then for any sector $S$ well-inside $\QQ$ there is a constant $C$ such that 
			$$\Phi(w) = w+c\log w+C+o(1)$$
			when $w\to \infty$ in $S$.
			\item For any $0< s'< s$, there is some $R>0$ depending only on $s$ and $s'$ such that if $\epsilon$ is sufficiently small then 
			 $$\QQ(\Phi(b_1+R), \Phi(b_2-R), s')\subset \Phi(\QQ).$$
		\end{enumerate}
	\end{prop}
	\begin{proof}
		Parts $(1)$-$(4)$ are exactly \cite[Proposition 2.5.2]{shishikura_1} with $\epsilon= 1/4$ and $s = 1$; the argument in general is identical.
		Part $(5)$  follows immediately from \cite[Proposition 2.6.2]{shishikura_1}, and part $(6)$ is exactly \cite[Lemma 3.5.1]{shishikura_1}
		for a particular choice of $\epsilon$, $s$, and $s'$; the general argument is identical.
	\end{proof}

	\subsection{Parabolic flowers}
	Let $f$ be an analytic function with a non-degenerate $p/q$-parabolic fixed point and defined in a neighborhood $V$ of zero.
	Up to analytic conjugacy, $f$ has the form 
	$$f(z)= e^{2\pi i p/q}z(1+z^{ q}+\beta z^{2 q}+O(z^{2 q + 1}))$$
	near $z=0$, 
	where $\beta\in \C$ is the \textit{formal invariant} of $f$ at $0$; see for example \cite[Appendix]{parabolic_PLY}. For $n\geq 1$, we can compute
	$$f^n(z) = e^{2\pi inp/q}z\left(1+nz^q+\beta_nz^{2q}+O(z^{2q+1})\right)$$
	near $z=0$ for some $\beta_n$.
	For every $n\geq 1$, conjugating by $\psi(z) :=q^2z^q$ yields the multi-valued map $\tilde{f}^n= \psi\circ f^n\circ \psi^{-1}$, every branch of which satisfies
	$$\tilde{f}^n(z) =  z\left(1+\frac{n}{q}z+\tilde{\beta}_nz^2+O(z^{2+1/q})\right)$$
	for some $\tilde{\beta}_n$ depending only on $\beta$ and $n$. 
	Note that we abuse notation a bit: the superscript in $\tilde{f}^n$ does not indicate an iterate, for example  $\tilde{f}^2\neq \tilde{f}^1\circ \tilde{f}^1$, but each branch of $\tilde{f}^2$ is a branch of $\tilde{f}^1\circ \tilde{f}^1$. We will use this convention for all the ``iterates" of multi-valued maps in this section.
	For $\tau(z) =-1/z$,
	we can conjugate $\tilde{f}^n$ to the multi-valued map ${F}^n= \tau\circ \tilde{f}^n\circ \tau^{-1}$, every branch of which satisfies 
	$${F}^n(w) = w+\frac{n}{q}+ \frac{{B}_n}{w}+ O\left(\frac{1}{w^{1+1/q}}\right)$$
	near $w=\infty$ for some ${B}_n$ that depends only on $\beta$ and $n$.
	As for $\tilde{f}^n$, the superscript in $F^n$ is not an iterate.
	Choosing some $t_0>0$ and $\epsilon>0$, let $\xi>0$ be large enough so that for for all $1\leq n \leq q$, we have
	$$|{F}^n(w) - (z-n/q)|< \epsilon/2 \text{ and }|({F}^n)'(z) - 1|< \epsilon/2$$
	for all $w$ in the sets
	$$\QQ^0 = \QQ(\xi, +\infty, t_0) \text{ and }\QQ^1= \QQ(-\infty, -\xi, t_0)$$
	and every branch of ${F}^n$.
	
	Fixing some continuous branch $\psi_0^{-1}$ of $\psi^{-1}$ defined on $\tau(\QQ^0)$, let $\psi_{-1}^{-1}$ and $\psi_{1}^{-1}$ be the continuous branches of $\psi^{-1}$ on $\tau(\QQ^1)$ 
	such that $\psi^{-1}_0\circ \tau$ agrees with $\psi_{-1}^{-1}\circ \tau$ and $\psi_{1}^{-1}\circ \tau$ on the upper and lower components of $\QQ^0\cap \QQ^1$ respectively. 
	For any $j\in \Z$, we inductively define $\psi_{j+2}^{-1} = e^{2\pi i /q}\cdot \psi_j^{-1}$. 
	Using this labeling of the branches of $\psi^{-1}$, we label the branches of ${F}^q$ by 
%
%
%
 $${F}^q_j= \tau^{-1}\circ \psi\circ f^q\circ \psi_j^{-1}\circ \tau: \QQ^{[j]}\to \C.$$
%
	Proposition \ref{fatou coordinates} implies that
	there is a univalent map $\Phi_j: \QQ^{[j]}\to \C$  conjugating ${F}^q_j$ to $T_1$
	for each $j\in \Z/2q\Z$.
	Fixing an attracting or repelling strip $S_j\subset \Phi_j(\QQ^{[j]})$ with tilt $t$  for each $j$, which is guaranteed to exist  by part (6) of Proposition \ref{fatou coordinates}, we set (see Figure \ref{fig flower parabolic})
	$$P_j = \psi_j^{-1}\circ \tau\circ \Phi_j^{-1}(S_j) \text{ and }\phi_j=\Phi_j\circ\tau^{-1}\circ \psi.$$
	With these definitions, the following proposition immediately implies Theorem \ref{flower}:
	
		\begin{figure}
		\begin{center}
			\def\svgwidth{4in}
\begingroup%
  \makeatletter%
  \providecommand\color[2][]{%
    \errmessage{(Inkscape) Color is used for the text in Inkscape, but the package 'color.sty' is not loaded}%
    \renewcommand\color[2][]{}%
  }%
  \providecommand\transparent[1]{%
    \errmessage{(Inkscape) Transparency is used (non-zero) for the text in Inkscape, but the package 'transparent.sty' is not loaded}%
    \renewcommand\transparent[1]{}%
  }%
  \providecommand\rotatebox[2]{#2}%
  \newcommand*\fsize{\dimexpr\f@size pt\relax}%
  \newcommand*\lineheight[1]{\fontsize{\fsize}{#1\fsize}\selectfont}%
  \ifx\svgwidth\undefined%
    \setlength{\unitlength}{407.51781554bp}%
    \ifx\svgscale\undefined%
      \relax%
    \else%
      \setlength{\unitlength}{\unitlength * \real{\svgscale}}%
    \fi%
  \else%
    \setlength{\unitlength}{\svgwidth}%
  \fi%
  \global\let\svgwidth\undefined%
  \global\let\svgscale\undefined%
  \makeatother%
  \begin{picture}(1,0.85073703)%
    \lineheight{1}%
    \setlength\tabcolsep{0pt}%
    \put(0,0){\includegraphics[width=\unitlength]{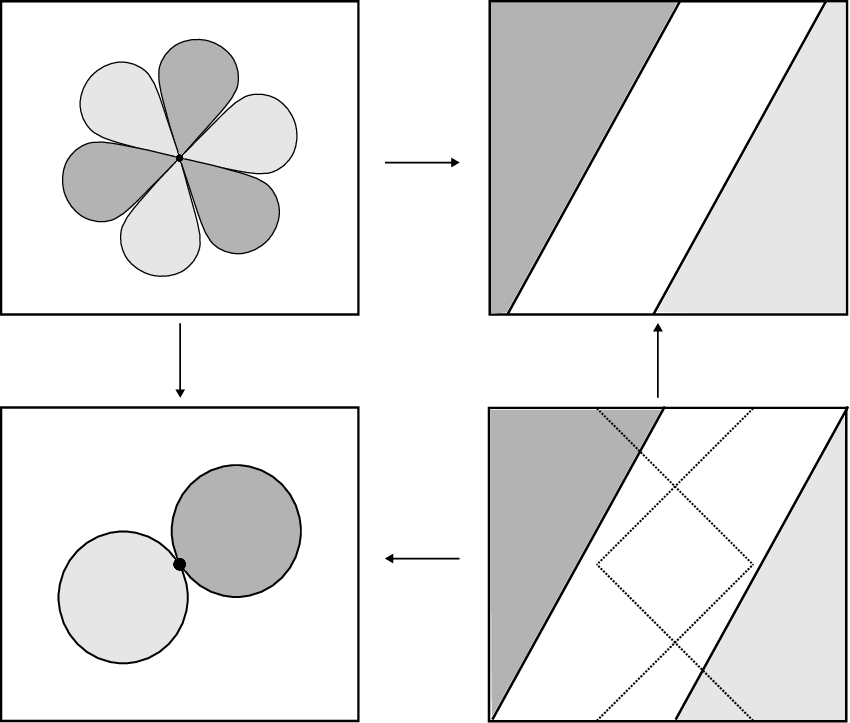}}%
    \put(0.48302221,0.21418154){\makebox(0,0)[lt]{\lineheight{1.25}\smash{\begin{tabular}[t]{l}$\tau$\end{tabular}}}}%
    \put(0.229368,0.42119756){\makebox(0,0)[lt]{\lineheight{1.25}\smash{\begin{tabular}[t]{l}$\psi$\end{tabular}}}}%
    \put(0.78798146,0.42119756){\makebox(0,0)[lt]{\lineheight{1.25}\smash{\begin{tabular}[t]{l}$\Phi_j$\end{tabular}}}}%
    \put(0.68217832,0.0973596){\makebox(0,0)[lt]{\lineheight{1.25}\smash{\begin{tabular}[t]{l}$\QQ^1$\end{tabular}}}}%
    \put(0.84630336,0.24343753){\makebox(0,0)[lt]{\lineheight{1.25}\smash{\begin{tabular}[t]{l}$\QQ^0$\end{tabular}}}}%
    \put(0.4686107,0.68176599){\makebox(0,0)[lt]{\lineheight{1.25}\smash{\begin{tabular}[t]{l}$\phi_{j}$\end{tabular}}}}%
  \end{picture}%
\endgroup%

			\caption{Construction of a parabolic flower.}
			\label{fig flower parabolic}
		\end{center}
	\end{figure}

	\begin{prop}
		We can choose $(P_j, \phi_j)_{j\in \Z/2q\Z}$ so that it is a $p/q$-parabolic flower for $f$ with tilt $t$ inside $V$.
	\end{prop}
	
	\begin{proof}
		It follows immediately from the definition that each $P_j\subset V$ is a Jordan domain with zero on its boundary.
		As each $S_j$ has the same tilt, it follows from the estimate for  $\Phi_j$ in part (5) of Proposition \ref{fatou coordinates} that, by choosing smaller strips $S_j$ if necessary, the closures of any two petals intersect only at zero. 
		Our definition of the branches of $\psi^{-1}$ implies that the circular ordering of the sets $P_j$ around zero is given by the ordering of $\Z/2q\Z$. 
		This proves the first condition in the definition of a parabolic flower; the other conditions follow easily from the construction.
	\end{proof}
	
	\begin{proof}[Proof of Theorem \ref{flower}]
		The collection $(P_j, \phi_j)$ is a desired flower.
	\end{proof}

	We can also now verify some propositions from Section \ref{sec:parabolic}:

	\begin{proof}[Proof of Proposition \ref{prop:petals}]
		If $(\tilde{P}_j, \tilde{\phi_j})$ is another flower, then, shrinking the flower if necessary, $\tau^{-1}\circ \psi$ maps each $\tilde{P}_j$ into $\QQ^{0}\cup \QQ^{1}$ for some $j$. It follows that, after analytically extending if necessary, there is some even $s$ so that $\Phi_{j+s}\circ \tau^{-1}\circ \psi$ is also a Fatou coordinate of $\tilde{P}_j$. The uniqueness of Fatou coordinates therefore implies that we can normalize so that
		$$\tilde{\phi}_j^{-1}= \psi_{j+s}\circ \tau\circ \Phi_{j+s}^{-1} = \psi_{j+s}^{-1}$$
		 on $\phi_{j+s}(P_{j+s})\cap \tilde \phi_j(\tilde P_j).$
	\end{proof}
	
	\begin{proof}[Proof of Proposition \ref{continuous dependence}]
		Part 4 of Proposition \ref{fatou coordinates} implies that each $\Phi_j$ depends continuously and holomorphically on $f$; the other dependences follow immediately.
	\end{proof}
	
	\begin{proof}[Proof of Proposition \ref{horn map parabolic}]
		It follows from the definition that the horn maps of $f$ are given by 
		$$H_+^f(w)= \Phi_{0}\circ G_1^{k(w)}\circ \Phi_{1}^{-1}(w)$$
		on $\QQ^1\cap \QQ^0$, where $k: \QQ^1\cap \QQ^0\to \{0, 1, \dots, q-1\}$ is continuous.
		The desired properties then follow  from the estimate of Fatou coordinates in part (5) of Proposition \ref{fatou coordinates}.		
	\end{proof}

	\subsection{Near-parabolic flowers}
	
	We keep  $f$ as above and fix some $t_0>0$. Let  $g$ be an analytic map defined on $V$ such that $g(0) = 0$, we will first restrict to the case where $g'(0)=\Exp\circ \mu_{p/q}^+ (\alpha)$ for some $\alpha\in A_{t_0}$. 
	On $V$, for each $n\geq 0$  we can write 
	$$g^n(z) = e^{2\pi i n(\mu_{p/q}^+(\alpha)-p/q)}f^n(z)(1+u_n(z))$$
	for some holomorphic function $u_n: V\to \C$ that converges locally uniformly to zero when $g\to f$. 
%
	In particular, $g^q$ has $q$ non-zero fixed points $(\sigma_{2j})_{j\in \Z/q\Z}$ that each satisfy
	\begin{equation}\label{sigma}
		q\sigma_j^q = (1-\Exp(q \mu_{p/q}^+(\alpha)))(1+o(1))
	\end{equation}
	when $g\to f$; we label these fixed points so that $\sigma_{2j+2}= g(\sigma_{2j})\approx e^{2\pi i /q}\sigma_{2j}.$
	We set 
	$$\sigma= q^2\prod_{j\in \Z/q\Z}\sigma_j \text{ and }\alpha' = \frac{q\alpha}{q+q_+'\alpha},$$
	so it follows from \eqref{sigma} and Proposition \ref{prop:mobius form} that
	$$\sigma= -2\pi i\alpha'(1+o(1))$$
	when $g\to f$.
	
	To make the $q\geq 1$ setting similar to the $q=1$ case, we first make the following observation:
	\begin{prop}
		There is a degree $\leq 2q-1$ polynomial $\psi_g:\C\to \C$ satisfying:
		\begin{enumerate}
			\item $\psi_g(z)/z^q = O(1)$ near $z=0$.
			\item $\psi_g(\sigma_{2j})=\sigma$ for all $j$. 
			\item $\psi_g\to \psi$ when $g\to f$.
		\end{enumerate}
	\end{prop}
	
	\begin{proof}
		Let $A = \{a_0, \dots, a_{q-1}\}$ be a set of $q$ distinct points in $\C^*$. First we observe that there is a polynomial $\psi_A$ such that $\psi_A(z)/z^q=O(1)$ near $z=0$, $\psi_A(a_j) =1$ for all $j$, and $\psi_A(z)\to z^q$ locally uniformly when $A$ converges to the $q$-th roots of unity. 
		Writing
		$$\psi_A(z) =  z^q(s_0+s_1z+\cdots s_{q-1}z^{q-1}),$$
		we can find $\psi_A$ by solving the system of $q$ linear equations determined by 
		$$\psi_A(a_0) = \psi_A(a_1)=\cdots \psi_A(a_{q-1})= 1$$
		for $s_0, \dots, s_{q-1}$. 
		As the points in $A$ are all distinct, this system of equations has a unique solution that  depends analytically on $A$. When $A$ is the set of $q$-th roots of unity, $\psi_A(z) = z^q$ solves the system, hence $\psi_A(z) \to z^q$ locally uniformly when $A$ converges to the $q$-th roots of unity.
		
		Now we set $A = \{1,\sigma_{2}/\sigma_0 \dots, \sigma_{2(q-1)}/\sigma_0\}$ and $$\psi_g(z)= \sigma \psi_A(z/\sigma_0).$$
		As $\sigma_{2j}/\sigma_0\to e^{2\pi ij/q}$ and $\sigma/ \sigma_0^q\to q^2$ when $g\to f$, the desired properties of $\psi_g$ follow from the properties of $\psi_A$ above.
	\end{proof}

	Using the map $\psi_g$, for all $n$ we define the multi-valued map
	$$\tilde{g}^n:= \psi_g\circ g^n\circ \psi_g^{-1}.$$
	Thus each branch of $\tilde{g}^n$ fixes zero and $\sigma$. 	
	As for $\tilde{f}^n$ in the previous subsection, the superscript for $\tilde{g}^n$ does not indicate an iterate as $\tilde{g}^2\neq \tilde{g}^1\circ \tilde{g}^1$.
	
	We have the universal covering map 
	$\tau_g: \C\to \chat\sm \{0, \sigma\}$
	given by 
	$$\tau_g(z) = \frac{\sigma}{1- \Exp(-\alpha'w)}.$$
	The group of deck-transformations of these maps is generated by the translation $T_g:=T_{1/\alpha'}$.
	Using the Taylor series expansion of $\Exp$, we observe that for any $w\in \C$ we have
	$$\tau_g(z)=\frac{\sigma}{1-\Exp(-\alpha'w)} = \frac{\sigma}{2\pi i \alpha' w}(1+O(\alpha'w))\to -\frac{1}{w}=\tau(z)$$
	when $g\to f$.
	
	We now recall the sets $\QQ^0 = \QQ(\xi, +\infty, t_0)$ and $\QQ^1= \QQ(-\infty, -\xi, t_0)$ of the previous subsection. We denote
	$$\QQ^0_g = \QQ(\xi, -\xi+ 1/\alpha', t_0).$$
	We therefore have that
	 $$\QQ_g^{0}\to \QQ^0 \text{ and }\QQ_g^1 = T^{- 1}_g(\QQ_g^{0})\to \QQ^1$$
	when $g\to f$. 
	
	\begin{figure}
		\begin{center}
			\def\svgwidth{5in}
\begingroup%
  \makeatletter%
  \providecommand\color[2][]{%
    \errmessage{(Inkscape) Color is used for the text in Inkscape, but the package 'color.sty' is not loaded}%
    \renewcommand\color[2][]{}%
  }%
  \providecommand\transparent[1]{%
    \errmessage{(Inkscape) Transparency is used (non-zero) for the text in Inkscape, but the package 'transparent.sty' is not loaded}%
    \renewcommand\transparent[1]{}%
  }%
  \providecommand\rotatebox[2]{#2}%
  \newcommand*\fsize{\dimexpr\f@size pt\relax}%
  \newcommand*\lineheight[1]{\fontsize{\fsize}{#1\fsize}\selectfont}%
  \ifx\svgwidth\undefined%
    \setlength{\unitlength}{1063.1899664bp}%
    \ifx\svgscale\undefined%
      \relax%
    \else%
      \setlength{\unitlength}{\unitlength * \real{\svgscale}}%
    \fi%
  \else%
    \setlength{\unitlength}{\svgwidth}%
  \fi%
  \global\let\svgwidth\undefined%
  \global\let\svgscale\undefined%
  \makeatother%
  \begin{picture}(1,0.81361981)%
    \lineheight{1}%
    \setlength\tabcolsep{0pt}%
    \put(0,0){\includegraphics[width=\unitlength]{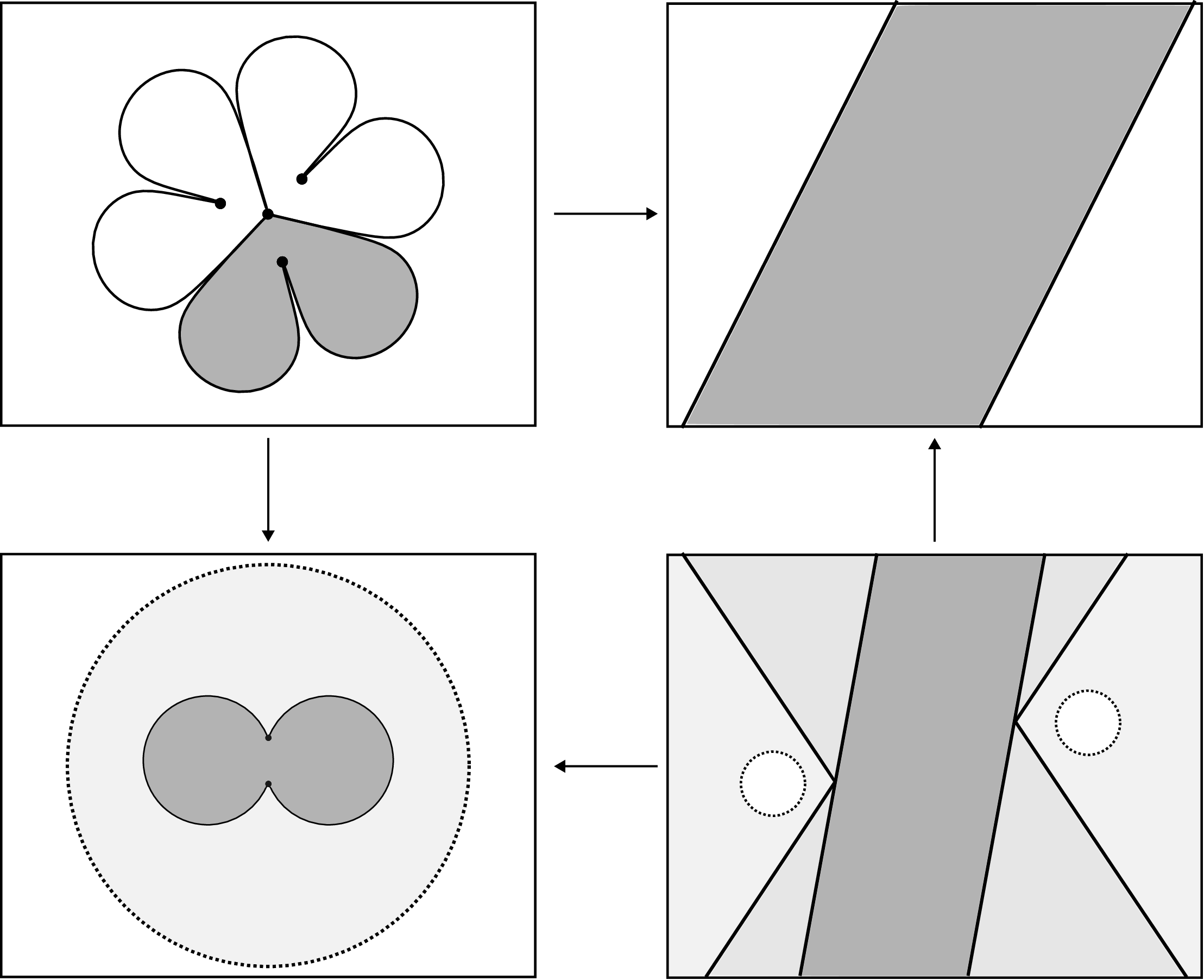}}%
    \put(0.47535833,0.65793358){\makebox(0,0)[lt]{\lineheight{1.25}\smash{\begin{tabular}[t]{l}$\phi_{g, 0}$\end{tabular}}}}%
    \put(0.48954653,0.19759227){\makebox(0,0)[lt]{\lineheight{1.25}\smash{\begin{tabular}[t]{l}$\tau_g$\end{tabular}}}}%
    \put(0.23982164,0.40140151){\makebox(0,0)[lt]{\lineheight{1.25}\smash{\begin{tabular}[t]{l}$\psi_g$\end{tabular}}}}%
    \put(0.78978187,0.40140151){\makebox(0,0)[lt]{\lineheight{1.25}\smash{\begin{tabular}[t]{l}$\Phi_{g, 0}$\end{tabular}}}}%
    \put(0.89289638,0.02408978){\makebox(0,0)[lt]{\lineheight{1.25}\smash{\begin{tabular}[t]{l}$\QQ_g^0$\end{tabular}}}}%
    \put(0.63750105,0.15603956){\makebox(0,0)[lt]{\lineheight{1.25}\smash{\begin{tabular}[t]{l}\tiny{$0$}\end{tabular}}}}%
    \put(0.88617107,0.20778729){\makebox(0,0)[lt]{\lineheight{1.25}\smash{\begin{tabular}[t]{l}\tiny{$1/\alpha$}\end{tabular}}}}%
    \put(0.32711228,0.49334609){\makebox(0,0)[lt]{\lineheight{1.25}\smash{\begin{tabular}[t]{l}$P_{g, 0}$\end{tabular}}}}%
  \end{picture}%
\endgroup%

			\caption{Construction of a near-parabolic flower. Note that the petals should be spiraling around the fixed points, we omit this detail.}
			\label{petal def fig}
		\end{center}
	\end{figure}

	For any even integer $j$, there is a unique branch of $\psi_g^{-1}\circ \tau_g$
	that sends  $w\in \QQ_g^{0}$ with $\text{Im}\,\alpha'w\ll 0$ close to $\sigma_j$; we will denote this branch by $\psi_{g, j}^{-1}\circ \tau_g$ and analytically extend it to all of $\QQ_g^{0}$. 
	As $\tau_g\to \tau$ and $\psi_g\to \psi$, we can choose our labeling of the cycle $(\sigma_j)$ so that $\psi_{g, 0}^{-1}\circ \tau_g\to \psi_0^{-1}\circ \tau$ on $\QQ^0$ when $g\to f$.
	As $\sigma_{j+2}\approx e^{2\pi i /q}\sigma_j$ and $\psi_{j+2}^{-1} = e^{2\pi i /q}\cdot \psi_j^{-1}$, it follows that $\psi_{g, j}^{-1}\circ \tau_g\to \psi_j^{-1}\circ \tau$ on $\QQ^0$ when $g\to f$ for all even $j$. 	
	Moreover, these branches are also related in the following way:
	
	\begin{prop}\label{same branch}
		For any even $j$,  
		$$\psi_{g, j}^{-1}\circ \tau_g\circ T_g=\begin{cases}
			\psi_{g, j+2}^{-1}\circ \tau_g&\text{ on the upper component of }\QQ_g^0\cap T_g^{-1}(\QQ_g^0),\\
			\psi_{g, j}^{-1}\circ \tau_g&\text{ on the lower component of }\QQ_g^0\cap T_g^{-1}(\QQ_g^0).
		\end{cases}$$
	\end{prop}

	\begin{proof}
		Let us fix some $w$ in the upper component of $\QQ_g^0\cap T_g^{-1}(\QQ_g^0)$. Let $\gamma_{0}$ be a path in $\QQ_g^0$ connecting $-i\infty$ to $T_g(w)$, and let $\gamma_2$ be a path in $T_g(\QQ_g^0)$ connecting $T_g(w)$ to $-i\infty$. Let $\gamma$ be the concatenation of $\gamma_0$ and $\gamma_2$. 
		
		As $\tau_g$ sends every integer multiple of $1/\alpha'$ to $\infty$ and $\gamma$ winds once clockwise around $1/\alpha'$, the path $\tau_g\circ \gamma$ connects $\sigma$ to itself and winds once counterclockwise around zero. 
		Thus we can analytically extend $\psi_{g, j}^{-1}\circ \tau_g$ on $\gamma_0$ to all of $\gamma$; the resulting curve $\psi_{g, j}^{-1}\circ \tau_g\circ \gamma$ is the unique lift of $\gamma$ by $\psi_g^{-1}\circ \tau_g$ that connects $\sigma_j$ to $\sigma_{j+2}$. We can similarly extend 
		$\psi_{g, j+2}^{-1}\circ \tau_g\circ T_g^{-1}$ 
		on $\gamma_2$ to all of $\gamma$; the resulting curve  $\psi_{g, j+2}^{-1}\circ \tau_g\circ T_g^{-1}\circ \gamma$ 
		is also the unique lift of $\gamma$ by $\psi_g^{-1}\circ \tau_g$ that connects $\sigma_j$ to $\sigma_{j+2}$. 
		Hence $\psi_{g, j}^{-1}\circ \tau_g\circ T_g(w) = \psi_{g, j+2}^{-1}\circ \tau_g(w)$. 
		
		For $w$ in the lower component of $\QQ_g^0\cap T_g^{-1}(\QQ_g^0)$,
		set $\tilde{z} = \tau_g(w)$.
		For 
		 $\text{Im}\,w\ll 0$, 
		 by definition both $\psi_{g, j}^{-1}\circ \tau_g(w)$ and $\psi_{g, j}^{-1}\circ \tau_g\circ T_g^{-1}(w)$  are the unique branch of 
		 $\psi_g^{-1}(\tilde{z})$ close to $\sigma_j$, hence $\psi_{g, j}^{-1}\circ \tau_g\circ T_g(w)=\psi_{g, j}^{-1}\circ \tau_g(w)$. By analytic continuation the equation holds all the entire lower component of $\QQ_g^0\cap T_g^{-1}(\QQ_g^0)$.
	\end{proof}
	
	For any odd integer $j$, we set $$\psi_{g, j}^{-1}\circ \tau_g= \psi_{g, j-1}^{-1}\circ \tau_g\circ T_g$$ on $\QQ_g^1$. 
	So Proposition \ref{same branch} implies that $\psi_{g, j}^{-1}\circ \tau_g= \psi_{g, j+1}^{-1}\circ \tau_g$ on the upper component of $\QQ_g^0\cap \QQ_g^1$; it then follows from the definition of $\psi_{j}^{-1}$ that 
	$\psi_{g, j}^{-1}\circ \tau_g\to \psi_j^{-1}\circ \tau$ on $\QQ^1$ when $g\to f$.

	For all integers $j$ and $w\in \QQ_g^{[j]}$ and $z = \psi_{g, j}^{-1}\circ \tau_g(w)$, we define
	$${G}^n_j(w) = w+\frac{1}{2\pi i \alpha'}\log\left(\frac{\psi_g\circ g^n(z)}{\psi_g\circ g^n(z)- \sigma}\cdot \frac{\psi_g(z)-\sigma}{\psi_g(z)}\right),$$
	 using the branch of log with imaginary part in $[-\pi, \pi).$
	 We have the following properties of $G_j^n$:

	\begin{prop}\label{prop:lifted perturbed}
		For all $1\leq n \leq q$ and all $j$,  if $g$ is close enough to $f$ then we have:
		\begin{enumerate}
			\item $\psi_{g,j+2np}^{-1}\circ \tau_g\circ {G}^n_j =g^n\circ \psi_{g,j}^{-1}\circ \tau_g$ and $$G_j^n\circ T_g = \begin{cases}
				 T_g\circ G^{n}_{j+2} &\text{ on the upper component of }\QQ_g^{[j]}\cap T_g^{-1}(\QQ_g^{[j]}),\\
				 T_g\circ G^{n}_{j} &\text{ on the lower component of }\QQ_g^{[j]}\cap T_g^{-1}(\QQ_g^{[j]}).
			\end{cases}$$
			\item For $w\in \QQ_g^{[j]}$, 
			$$|{G}^n_j(w) - (w+n/q)|< \epsilon \text{ and }|( {G}_j^n)'(w) -1|< \epsilon.$$
			\item ${G}^q_j(w) = w+1+O(1/w^2)$ when $\emph{Im}\,w\to \infty$
			\item ${G}_j^n$ converges to ${F}^n_j$ on $\QQ^{[j]}$ when $g\to f$.
		\end{enumerate}
	\end{prop}

	\begin{proof}
		Using the Taylor series expansion of $\log(1+x)$, we can estimate 
		\begin{align*}
			{G}_j^n(w) &= w+\frac{1}{2\pi i \alpha'}\left(\frac{\psi_g\circ g^n(z)}{\psi_g\circ g^n(z)- \sigma}\cdot \frac{\psi_g(z)-\sigma}{\psi_g(z)}-1\right)+o(1)\\
			&=w+\frac{-\sigma(\psi_g\circ g^n(z) - \psi_g(z))}{2\pi i \alpha'(\psi_g\circ g^n(z) - \sigma)\psi_g(z)}+o(1)\\
			&\to -\frac{1}{\tau(w)}+\frac{\tilde{f}^n\circ \tau(w) -\tau(w)}{\tau(w)\cdot \tilde{f}^n\circ \tau(w)}\\
			& = \tau\circ \tilde{f}^n\circ \tau(w) = {F}^n(w)
		\end{align*}
		when $g\to f$ for some branch of $F^n$;  part (4) of the proposition with then follow from part (1) of the proposition and the convergence of $g\to f$ and $\psi_{g, j}^{-1}\circ \tau_g\to \psi_j^{-1}\circ\tau$. As every branch of ${F}^n$ satisfies $$|{F}^n(w) - (z-n/q)|< \epsilon/2 \text{ and }|({F}^n)'(z) - 1|< \epsilon/2$$ on $\QQ^0$, part $(2)$ of the proposition follows.
		
		For $z = \psi_{g, j}^{-1}\circ \tau_g(w)$, so $\psi(z) = \tau_g(w)$, using the definition of $\tau_g$ we can compute explicitly 
		\begin{align*}
			\tau_{g}\circ {G}_j^n(w) &= \frac{\sigma\cdot \psi_g\circ g^n(z)(\psi_g(z)-\sigma)}{\sigma\cdot \psi_g\circ g^n(z)(\psi_g(z)-\sigma)-(\psi(z)-\sigma)(\psi_g\circ g^n(z)-\sigma)}\\
			&=\psi_g\circ g^n(z).
		\end{align*}
		As $G_j^n(w) \approx w+n/q$ and $$g^n(z) \approx e^{2\pi i np/q}z \approx \psi_{g,j+2np}^{-1}(w),$$ we can conclude that  $\psi_{g, j+2np}^{-1}\circ \tau_g\circ G^n_j = g^n\circ \psi_{g, j}^{-1}\circ \tau_g$; this proves the first equation in part $(1)$ of the proposition. The second equation in part (1)
		follows immediately from the definition of $G_j^n$ and Proposition \ref{same branch}.

		For part $(3)$ of the proposition, we observe that $\tau_j(w) \to 0$ when $\text{Im}\, w\to \pm \infty$ and $(G^q)'(0) = \Exp(q\alpha').$ Hence
		$${G}_j^q(w)-(w+1)\to 0$$
		when $\text{Im}\, w\to \pm \infty$. As ${G}_j^q$ is periodic under $T_g^q$ by the above, we can compute the Fourier expansion to see that the decay can be at most $O(1/w^2)$.
	\end{proof}
	
 	While the superscript of $G^n$ does not indicate an iterate, part (1) of Proposition \ref{prop:lifted perturbed} implies that for any $j$ and $n$, 
 	$$G_{j+2np}\circ G_j^n = G_{j+2p}^n\circ G_j$$
 	wherever both sides of the equation are defined.

 	For all integers $j$,  it follows from Propositions \ref{fatou coordinates} and \ref{prop:lifted perturbed} that there is an analytic map $\Phi_{g, j}: \QQ_g^{[j]}\to \C$ conjugating ${G}_j^q$ to $T_1$.
 	Let $S_{g, j}$ be a strip in $\Phi_{g, j}(\QQ_g^{[j]})$ for all $j$, and set 
 	$$P_{g, j} = \psi_{g, j}^{-1}\circ \tau_g\circ \Phi_{g, j}^{-1}(S_{g, j})\text{ and }\phi_{g, j} = \Phi_{g, j}\circ \tau_g^{-1}\circ \psi_g.$$
 	The following proposition immediately proves Theorem \ref{thm:near-parabolic flowers}:

 	\begin{prop}
 		We can choose the strips $S_{g, j}$ such that $(P_{g, j}, \Phi_{g, j})$ is a near-parabolic flower for $g$ relative to $f$.
 	\end{prop}
 	
 	\begin{proof}
%
%
	Fixing some $z_j\in P_j$ and setting $w_{j} = \tau^{-1}\circ \psi(z_j)$, when $g$ is close to $f$ there is a unique $w_j\in \tau_g^{-1}\circ \psi_g(z_j)$ close to $w_j$. Normalizing the Fatou coordinates so that $\Phi_{g, j}(w_{g, j}) = \Phi_{j}(w_j)$, it follows from Proposition \ref{fatou coordinates} that $\Phi_{g, j}\to \Phi_{j}$ when $g\to f$.
	
	When $q = 1$, $t_0 = 1$, and $\alpha\in \R$, it is shown in \cite[Lemma 17]{positive_area} that for any $R>0$  
	the image of $\Phi_{g, 0}$ contains the vertical strip 
	$$\QQ(\Phi_{g, 0}(\xi+R), \Phi_{g, 0}(-\xi-R+1/\alpha'), 0)$$
	and $\tau_g\circ \Phi_{g, 0}^{-1}$ is injective on this strip
	when $\epsilon$ is sufficiently small (which corresponds to taking $\xi$ large and $g$ close to $f$). 
	Fixing some $0< t_0'< t_0$, the same argument implies in our setting that the image of 
	$\Phi_{g, 0}$ contains the the region
	$$\QQ(\Phi_{g, 0}(\xi+R), \Phi_{g, 0}(-\xi-R+1/\alpha'), t_0')$$
	and $\tau_g\circ \Phi_{g, 0}^{-1}$ is injective on the maximal strip with tilt $t\in (-t_0', t_0)$ in this region when $\epsilon$ is sufficiently small; we define $S_{g, 0}$ to be this strip. 
	Hence $P_{g, 0}$ is a Jordan domain with both zero and $\sigma_{g, 0}$ on its boundary, and $\phi_{g, 0}$ is a Fatou coordinate on $P_{g, 0}$. Choosing the other petals so that 
	$P_{g, j+2}= g(P_{g, j})$ for $j\neq 0$, the only hypothesis to check that $(P_{g, j}, \phi_{g, j})$ is a near-parabolic flower for $g$ relative to $f$ is that $\overline{P_{g, j}}\cap \overline{P_{g, j'}}=\{0\}$ for any even $j, j'$.  
	
	We consider  $P_{g, j}$ and $g^m(P_{g, j})$ for some even $j$ and  some $0< m< q$. 
	Setting $\tilde S = \Phi_{g, j}^{-1}(S_{g,j})$ and $\tilde{S}' = T_g^{mp}\circ G^m_j(\tilde{S})$,
	it follows from Propositions \ref{same branch} and \ref{prop:lifted perturbed} that we can extend $\psi_{g, j}^{-1}\circ \tau_g$ so that it maps $\tilde{S}$ and $\tilde{S}'$ to 
	$P_{g, j}$ and $g^m(P_{g, j})$ respectively. 
	Note that if the closures of $P_{g, j}$ and $g^m(P_{g, j})$ have some non-zero point in their intersections, then this point must be close to zero. Indeed this follows from the convergence of $\phi_{g, j}\to \phi_j$ and $\phi_{g, j+1}\to \phi_{j+1}$ when $g\to \infty$. Hence if the closures of $P_{g, j}$ and $g^m(P_{g, j})$ have some non-zero point in their intersections, then the intersection of the closures of $\tilde{S}$ and $\tilde{S}''= T_g^{kq}(\tilde{S}')$ must be non-empty for some integer $k$. We can analytically extend $\Phi_{g, j}$ to both $\tilde{S}$ and $\tilde{S}''$; the uniqueness of Fatou coordinates implies that the image of both sets are strips with tilt $t$. The asymptotic estimate of $\Phi_{g, j}$ near $\infty$ and the definition of $S_{g, j}$ together imply that $\tilde{S}$ and $\tilde{S}''$ have disjoint closures.
 	\end{proof}

 	\begin{proof}[Proof of Theorem \ref{thm:near-parabolic flowers}]
 		The collection $(P_{g, j}, \Phi_{g, j})$ is a desired flower.
 	\end{proof}

 	We can also now prove some propositions from Section \ref{sec:near-parabolic}.

 	\begin{proof}[Proof of Proposition \ref{prop:petals perturbed}]
 		The argument is similar to  the proof of Proposition \ref{prop:petals}: for another flower $(\tilde P_{g, j}, \tilde \phi_{g, j})$ we can lift the petal $\tilde P_{g, j}$ by some  $\psi_{g, j+s}^{-1}\circ \tau_g$ and extend $\Phi_{g, j+s}$ to the lift. 
 		The uniqueness of the Fatou coordinates allows us to normalize so that 
 		$$\tilde \phi_{g, j}^{-1} = \psi_{g, j+s}^{-1}\circ \tau_g\circ \Phi_{g, j+s}^{-1} = \phi_{g, j+s}^{-1}$$
 		on $\tilde \phi_{g, j}(\tilde P_{g, j})\cap \phi_{g, j+s}(P_{g, j+s})$. When $g\to f$, the convergence of the Fatou coordinates to Fatou coordinates for $f$ ensures that this intersection is non-empty. 
 	\end{proof}
 	\begin{proof}[Proof of Proposition \ref{prop:near-parabolic independence}]
 		Note that our construction of the flower above does not strongly depend on $f$: we only use $f$ to get uniform control of $\epsilon$. In particular, if $\tilde{f}$ is another map with a $p/q$-parabolic fixed point at zero and $\tilde{f}$ is sufficiently close to $f$, then the construction of a flower for $g$ relative to $\tilde{f}$ is identical.
 	\end{proof}
 	\begin{proof}[Proof of Proposition \ref{prop:tilt}]
 		 For $q = 1$, $t=0$, $t_0 = 1$, and $\alpha\in \R$ this is proved in \cite[Lemma 17]{positive_area};
 		our argument will be the same.
 		We set
 		$$\Gamma_g^- = \left\{\frac{\log \alpha}{2\pi i}-is: s>0\right\}\text{ and }\Gamma_g^+ = \left\{\frac{\log \alpha}{2\pi i}+\alpha'(i+t)s: s>0\right\},$$
 		using the branch of $\log\alpha$ with imaginary part in $[-\pi, \pi)$.
 		For $\Gamma_g = \Gamma_g^-\cup \Gamma_g^+$, first we show:
 		\begin{lemma}\label{lem:first log}
 			There is a continuous branch of $\log\circ\tau_g$ defined on $\Phi^{-1}(S_{g, 0})$ such that
 			$$\sup_{w\in \Phi^{-1}(S_{g, 0})}\emph{dist}\left(\frac{\log\circ \tau_g(w)}{2\pi i }, \Gamma_g\right)< {M}$$
 			for some uniform $M>0$ when $g\to f$.
 		\end{lemma}
 		
 		\begin{proof}
 			We  set $$\Omega_g= \{w\in \C:\text{Im}\, \alpha'w>0\}.$$
 			As $\tau_g$ sends the interval $(0, 1/\alpha')$ to the perpendicular bisector of the interval $(0, \sigma)$, and as $$\sigma =-2\pi i\alpha(1+o(1))$$ when $g\to f$, there is a  continuous branch of ${\log\circ \tau_g}$ defined on $\C\sm \Omega_g$ satisfying
 			$$\sup_{w\in \C\sm \Omega_g}\text{dist}\left(\frac{\log\circ \tau_g(w)}{2\pi i}, \Gamma_g^-\right)< M^-$$
 			for some constant $M^-$ that does not depend on $g$.
 			Let $\ell_{g, 0}$ denote the intersection of the line through $1/2\alpha'$ with slope $1/t$ with $\Omega_g$, and set $\ell_{g, k}= T_g^k(\ell_{g, 0})$ for all $k$. 
 			As 
 			$$\frac{\log\circ \tau_g(w)}{2\pi i }=  \alpha'w+\frac{\log \sigma}{2\pi i}+\frac{\log(\Exp(\alpha'w)-1)}{2\pi i},$$
 			there is a continuous branch of $\log\circ \tau_g$ defined on $\Omega_g$ such that
 			$$\sup_k\sup_{w\in \ell_{g, k}}\left(\frac{\log\circ \tau_g(w)}{2\pi i}, T_k(\Gamma_g^+)\right)< C^+$$
 			for some constant $C^+$ that depends only on $t$ and $t_0$. So to complete the proof, it suffices to show that $\Omega_g\cap\Phi_{g, 0}^{-1}(S)$ intersects only bounded number of the lines $\ell_{g, k}$, or equivalently
 			$$\sup_{w\in \Omega_g\cap \Phi_{g, 0}^{-1}(S_{g,0})}\text{dist}(w, \ell_{g,0})< \frac{M^+}{|\alpha'|},$$
 			when $g$ is close to $f$ for some constant $M^+$ that does not depend on $g$.
 			This follows immediately from the fact that 
 			$$\sup_{w\in \Omega_g\cap \QQ_g^0}|\Phi_{g, 0}(w)-w|= O(1/\alpha')$$
 			when $g\to f$. This fact is proved in \cite[Lemma 17]{positive_area} when $q = 1$, $t=0$, $t_0 = 1$, and $\alpha\in \R$;  in our general setting the argument is identical. 
 		\end{proof}
 		
 		As $\psi_g$ maps $P_{g, 0}$ univalently into $\tau_g\circ \Phi_{g, 0}^{-1}({S}_{g, 0})$ and converges to $\psi(z) =q^2z^q$, it follows from Lemma \ref{lem:first log} that there is a branch of $\log$ defined on $P_{g, 0}$ such that
 		$$\sup_{z\in P}\text{dist}\left(\frac{\log z}{2\pi i}, \Gamma_\alpha\right)< M$$
 		when $g\to f$ 
 		for some constant $M$ that does not depend on $g$, where 
 		$$\Gamma_\alpha= \{w: qw \in \Gamma_g\}.$$
 		Note that $\Gamma_g^-$ is contained in a strip of tilt $0$, and $\Gamma_g^+$ is contained in a strip with tilt $t'$, where 
 		$$t' = \frac{t\text{Re}\,\alpha'-\text{Im}\,\alpha'}{t\text{Im}\, \alpha'+\text{Re}\,\alpha'}$$
 		As every point in $\tilde P=( \log P_{g, 0})/2\pi i$ is distance at most one from $\Gamma_\alpha$, and $\tilde{P}$ is also contained in an upper half-plane that depends only on $V$, it follows that $\tilde{P}$ is contained in a strip with tilt $t'$ and width $t'|\log|\alpha||+2M$
 		as desired.
 	\end{proof}
 	
 	\begin{proof}[Proof of Proposition \ref{prop:phase}]
 		Writing $\phi_{g, 1}= T_\lambda\circ \phi_{g, 0}$ for some $\lambda\in \C$, it follows from the definition that $$\Phi_{g, 1} = T_\lambda\circ \Phi_{g, 0}\circ T_g.$$
 		Let $H_g$ be the corresponding horn map for $g$ relative to $f$.
 		Fixing some $x\in \R$ and $w = x+it$ for $t>0$, when $g$ is close to $f$ and $t\gg 0$ we have
 		$$H_g(w) = \Phi_{g, 0}\circ G_1^{q_+'}\circ \Phi_{g, 1}^{-1}(w).$$
 		Normalizing the Fatou coordinates so that $H_g(w)-w\to 0$ when $t\to \infty$, as $\Phi_{g, 0}$ and $G_1^{q_+'}$ tend to a translation and  to $T_{q_+'/q}$ respectively near $\infty$, it follows that 
 		$$0=\frac{q_+'}{q}-\frac{1}{\alpha'}-\lambda= -\frac{1}{\alpha}-\lambda,$$
 		hence $\lambda = -1/\alpha$.
 		
 		With the above normalization, we need to check that we still have $\phi_{g, 1}\to \phi_1$ when $g\to 1$. When $q = 1$ this is proved in  in \cite[Lemma 3.4.2]{shishikura_1}; the same argument applies here.
 	\end{proof}

 	\begin{proof}[Proof of Proposition \ref{horn maps perturbed}]
 		The argument is almost identical to the proof of Proposition \ref{horn map parabolic}; the only difference is checking that the horn map $H_+^{g, f}$ is well-defined. The well-definedness follows similarly to the proof of Proposition \ref{prop:extensions}: if $\rho^g\circ \chi_+^{g}g(w-m_i)$ is defined for some distinct integers $m_1< m_2$, our definition of the extensions $\rho^g$ and $\chi_+^g$ ensures that
 		\begin{align*}
 			T_{-m_2}\circ \rho^g\circ \chi_+^g(w+m_2)&= T_{-m_2}\circ \rho^g\circ g^{(m_2-m_1)q}\circ \chi_+^g(w+m_1)\\
 			&= T_{-m_1}\circ \rho^g\circ g^{(m_2-m_1)q}\circ \chi_+^g(w+m_1),
 		\end{align*}
 		so $H_+^g(w)$ does not depend on the choice of $m_i$.
 	\end{proof}
 	
 	Let us now consider instead the case where $g'(0) = \Exp\circ \mu_{p/q}^-(\alpha)$. 
 	Setting $f^*(z) = \overline{f(\overline{z})}$ and $g^*(z) = \overline{g(\overline{z})}$, we observe that $f^*$ has a non-degenerate $-p/q$-parabolic fixed point at zero and $$(g^*)'(0)= \Exp\circ \mu_{-p/q}^+(\overline{\alpha}).$$ For $(P^*_{g^*, j}, \phi^*_{g, j})_{j\in \Z/q\Z}$ a near-parabolic flower for $g^*$ relative to $f^*$, note that collection $(P_{g, j}, \phi_{g, j})$ with $P_{g, j} = \{z: \overline{z}\in P_{g^*, -j}^*\}$  and $\phi_{g, j}(z) = \overline{\phi_{g^*, -j}^*(\overline{z})}$ is a near-parabolic flower for $g$ relative to $f$. With this description, the desired properties for the flower all follow easily.

	\renewcommand*{\bibfont}{\small}
	\printbibliography

@article {parabolic_PLY,
	AUTHOR = {Buff, Xavier and Epstein, Adam L.},
	TITLE = {A parabolic {P}ommerenke-{L}evin-{Y}occoz inequality},
	JOURNAL = {Fund. Math.},
	FJOURNAL = {Fundamenta Mathematicae},
	VOLUME = {172},
	YEAR = {2002},
	NUMBER = {3},
	PAGES = {249--289},
	ISSN = {0016-2736,1730-6329},
	MRCLASS = {37F10},
	MRNUMBER = {1898687},
	MRREVIEWER = {Peter\ Ha\"issinsky},
	DOI = {10.4064/fm172-3-3},
	URL = {https://doi.org/10.4064/fm172-3-3},
}

@ARTICLE{CS_satellite,
	author = {Cheraghi, Davoud and Shishikura, Mitshurio},
	title = "{Satellite renormalization of quadratic polynomials}",
	journal = {arXiv e-prints},
	keywords = {Mathematics - Dynamical Systems},
	year = 2015,
	month = sep,
	eid = {arXiv.1509.07843},
	pages = {arXiv.1509.07843},
	archivePrefix = {arXiv},
	eprint = {1509.07843},
	primaryClass = {math.DS},
	adsurl = {https://arxiv.org/abs/1509.07843v1},
}

@article {positive_area,
	AUTHOR = {Buff, Xavier and Ch\'{e}ritat, Arnaud},
	TITLE = {Quadratic {J}ulia sets with positive area},
	JOURNAL = {Ann. of Math. (2)},
	FJOURNAL = {Annals of Mathematics. Second Series},
	VOLUME = {176},
	YEAR = {2012},
	NUMBER = {2},
	PAGES = {673--746},
	ISSN = {0003-486X},
	MRCLASS = {37F50},
	MRNUMBER = {2950763},
	MRREVIEWER = {Peter Ha\"{\i}ssinsky},
	DOI = {10.4007/annals.2012.176.2.1},
	URL = {https://doi.org/10.4007/annals.2012.176.2.1},
}

@incollection {shishikura_1,
	AUTHOR = {Shishikura, Mitsuhiro},
	TITLE = {Bifurcation of parabolic fixed points},
	BOOKTITLE = {The {M}andelbrot set, theme and variations},
	SERIES = {London Math. Soc. Lecture Note Ser.},
	VOLUME = {274},
	PAGES = {325--363},
	PUBLISHER = {Cambridge Univ. Press, Cambridge},
	YEAR = {2000},
	MRCLASS = {37F45 (30D05)},
	MRNUMBER = {1765097},
	MRREVIEWER = {Carsten Lunde Petersen},
}

@article {shishikura_boundary,
	AUTHOR = {Shishikura, Mitsuhiro},
	TITLE = {The {H}ausdorff dimension of the boundary of the {M}andelbrot
	set and {J}ulia sets},
	JOURNAL = {Ann. of Math. (2)},
	FJOURNAL = {Annals of Mathematics. Second Series},
	VOLUME = {147},
	YEAR = {1998},
	NUMBER = {2},
	PAGES = {225--267},
	ISSN = {0003-486X},
	MRCLASS = {37F35 (30D05 37F45 37F50)},
	MRNUMBER = {1626737},
	MRREVIEWER = {Hartje Kriete},
	DOI = {10.2307/121009},
}

@book {Orsay2,
	AUTHOR = {Douady, A. and Hubbard, J. H.},
	TITLE = {\'{E}tude dynamique des polyn\^{o}mes complexes. {P}artie {II}},
	SERIES = {Publications Math\'{e}matiques d'Orsay [Mathematical Publications
	of Orsay]},
	VOLUME = {85},
	NOTE = {With the collaboration of P. Lavaurs, Tan Lei and P. Sentenac},
	PUBLISHER = {Universit\'{e} de Paris-Sud, D\'{e}partement de Math\'{e}matiques, Orsay},
	YEAR = {1985},
	PAGES = {v+154},
	MRCLASS = {58F08 (30D05 39B10)},
	MRNUMBER = {812271},
	MRREVIEWER = {M. Rees},
}

@book {Orsay1,
	AUTHOR = {Douady, A. and Hubbard, J. H.},
	TITLE = {\'{E}tude dynamique des polyn\^{o}mes complexes. {P}artie {I}},
	SERIES = {Publications Math\'{e}matiques d'Orsay [Mathematical Publications
	of Orsay]},
	VOLUME = {84},
	PUBLISHER = {Universit\'{e} de Paris-Sud, D\'{e}partement de Math\'{e}matiques, Orsay},
	YEAR = {1984},
	PAGES = {75},
	MRCLASS = {58F08 (30D05 39B10)},
	MRNUMBER = {762431},
	MRREVIEWER = {M. Rees},
}

@phdthesis{Lavaurs,
	author       = {Lavaurs, Pierre}, 
	title        = {Systemes dynamiques holomorphes: explosion de points periodiques
	paraboliques},
	school       = {These de doctrat de l'Universite de Paris-Sud},
	year         = 1989,
	address      = {Orsay, France}
}

@incollection {douady,
	AUTHOR = {Douady, Adrien},
	TITLE = {Does a {J}ulia set depend continuously on the polynomial?},
	BOOKTITLE = {Complex dynamical systems ({C}incinnati, {OH}, 1994)},
	SERIES = {Proc. Sympos. Appl. Math.},
	VOLUME = {49},
	PAGES = {91--138},
	PUBLISHER = {Amer. Math. Soc., Providence, RI},
	YEAR = {1994},
	MRCLASS = {58F23 (30D05)},
	MRNUMBER = {1315535},
	DOI = {10.1090/psapm/049/1315535},
}

@article{shishikura,
	title = {The renormalization for parabolic fixed points and their
	perturbation},
	author = {Inou, H. and Shishikura, M.},
	Year = {2008},
	Journal = {Manuscript},
}

@book {Yampolsky,
	AUTHOR = {Lanford, III, Oscar E. and Yampolsky, Michael},
	TITLE = {Fixed point of the parabolic renormalization operator},
	SERIES = {SpringerBriefs in Mathematics},
	PUBLISHER = {Springer, Cham},
	YEAR = {2014},
	PAGES = {viii+111},
	ISBN = {978-3-319-11706-5; 978-3-319-11707-2},
	MRCLASS = {37F25 (30D05 37F10)},
	MRNUMBER = {3308117},
	MRREVIEWER = {Artem Dudko},
	DOI = {10.1007/978-3-319-11707-2},
}

@book {Milnor,
	AUTHOR = {Milnor, John},
	TITLE = {Dynamics in one complex variable},
	SERIES = {Annals of Mathematics Studies},
	VOLUME = {160},
	EDITION = {Third},
	PUBLISHER = {Princeton University Press, Princeton, NJ},
	YEAR = {2006},
	PAGES = {viii+304},
	ISBN = {978-0-691-12488-9; 0-691-12488-4},
	MRCLASS = {37Fxx (30-01 30D05 37-01)},
	MRNUMBER = {2193309},
}

@incollection {Oudkerk,
	AUTHOR = {Oudkerk, Richard},
	TITLE = {The parabolic implosion: {L}avaurs maps and strong convergence
	for rational maps},
	BOOKTITLE = {Value distribution theory and complex dynamics ({H}ong {K}ong,
	2000)},
	SERIES = {Contemp. Math.},
	VOLUME = {303},
	PAGES = {79--105},
	PUBLISHER = {Amer. Math. Soc., Providence, RI},
	YEAR = {2002},
	MRCLASS = {37F10 (30D05 37F35 37F50 39B12)},
	MRNUMBER = {1943528},
	MRREVIEWER = {Peter Ha\"{\i}ssinsky},
	DOI = {10.1090/conm/303/05239},
}

@article {MMYconjecture,
	AUTHOR = {Cheraghi, Davoud and Ch\'{e}ritat, Arnaud},
	TITLE = {A proof of the {M}armi-{M}oussa-{Y}occoz conjecture for
	rotation numbers of high type},
	JOURNAL = {Invent. Math.},
	FJOURNAL = {Inventiones Mathematicae},
	VOLUME = {202},
	YEAR = {2015},
	NUMBER = {2},
	PAGES = {677--742},
	ISSN = {0020-9910},
	MRCLASS = {37F50 (11A55 30B70 30D05)},
	MRNUMBER = {3418243},
	MRREVIEWER = {Haifeng Chu},
	DOI = {10.1007/s00222-014-0576-2},
}

@misc{Shishkura-Yang,
	doi = {10.48550/ARXIV.1608.04106},
	
	url = {https://arxiv.org/abs/1608.04106},
	
	author = {Shishikura, Mitsuhiro and Yang, Fei},
	
	keywords = {Dynamical Systems (math.DS), FOS: Mathematics, FOS: Mathematics, Primary 37F10, Secondary 37F50},
	
	title = {The high type quadratic Siegel disks are Jordan domains},
	
	publisher = {arXiv},
	
	year = {2016},
	
	copyright = {arXiv.org perpetual, non-exclusive license}
}

@article {Cheraghi13,
	AUTHOR = {Cheraghi, Davoud},
	TITLE = {Typical orbits of quadratic polynomials with a neutral fixed
	point: {B}rjuno type},
	JOURNAL = {Comm. Math. Phys.},
	FJOURNAL = {Communications in Mathematical Physics},
	VOLUME = {322},
	YEAR = {2013},
	NUMBER = {3},
	PAGES = {999--1035},
	ISSN = {0010-3616},
	MRCLASS = {37F50},
	MRNUMBER = {3079339},
	MRREVIEWER = {Claire Chavaudret},
	DOI = {10.1007/s00220-013-1747-5},
}

@article {Cheraghi19,
	AUTHOR = {Cheraghi, Davoud},
	TITLE = {Typical orbits of quadratic polynomials with a neutral fixed
	point: non-{B}rjuno type},
	JOURNAL = {Ann. Sci. \'{E}c. Norm. Sup\'{e}r. (4)},
	FJOURNAL = {Annales Scientifiques de l'\'{E}cole Normale Sup\'{e}rieure. Quatri\`eme
	S\'{e}rie},
	VOLUME = {52},
	YEAR = {2019},
	NUMBER = {1},
	PAGES = {59--138},
	ISSN = {0012-9593},
	MRCLASS = {37F45 (37F10 37F25 37F50)},
	MRNUMBER = {3940907},
	MRREVIEWER = {Haifeng Chu},
	DOI = {10.24033/asens.2384},
}

@misc{Cheraghi17,
	doi = {10.48550/ARXIV.1706.02678},
	
	url = {https://arxiv.org/abs/1706.02678},
	
	author = {Cheraghi, Davoud},
	
	keywords = {Dynamical Systems (math.DS), Functional Analysis (math.FA), Number Theory (math.NT), FOS: Mathematics, FOS: Mathematics, 37F50 (Primary), 37F10, 46T25 (Secondary)},
	
	title = {Topology of irrationally indifferent attractors},
	
	publisher = {arXiv},
	
	year = {2017},
	
	copyright = {arXiv.org perpetual, non-exclusive license}
}

@article {AC18,
	AUTHOR = {Avila, Artur and Cheraghi, Davoud},
	TITLE = {Statistical properties of quadratic polynomials with a neutral
	fixed point},
	JOURNAL = {J. Eur. Math. Soc. (JEMS)},
	FJOURNAL = {Journal of the European Mathematical Society (JEMS)},
	VOLUME = {20},
	YEAR = {2018},
	NUMBER = {8},
	PAGES = {2005--2062},
	ISSN = {1435-9855},
	MRCLASS = {37F50 (28D05 30D05 37A50 37F25)},
	MRNUMBER = {3854897},
	MRREVIEWER = {Y\^{u}suke Okuyama},
	DOI = {10.4171/JEMS/805},
}

@article {Cheritat22,
	AUTHOR = {Ch{\'{e}}ritat, Arnaud},
	TITLE = {Near parabolic renormalization for unicritical holomorphic
	maps},
	JOURNAL = {Arnold Math. J.},
	FJOURNAL = {Arnold Mathematical Journal},
	VOLUME = {8},
	YEAR = {2022},
	NUMBER = {2},
	PAGES = {169--270},
	ISSN = {2199-6792},
	MRCLASS = {37F25},
	MRNUMBER = {4446269},
	DOI = {10.1007/s40598-020-00172-6},
}

@article {leau,
	AUTHOR = {Leau, L\'{e}opold},
	TITLE = {\'{E}tude sur les \'{e}quations fonctionnelles \`a une ou \`a plusieurs
	variables},
	JOURNAL = {Ann. Fac. Sci. Toulouse Sci. Math. Sci. Phys.},
	FJOURNAL = {Annales de la Facult\'{e} des Sciences de Toulouse pour les
	Sciences Math\'{e}matiques et les Sciences Physiques},
	VOLUME = {11},
	YEAR = {1897},
	NUMBER = {3},
	PAGES = {E25--E110},
	ISSN = {0996-0481},
	MRCLASS = {DML},
	MRNUMBER = {1508188},
	URL = {http://www.numdam.org/item?id=AFST_1897_1_11_3_E25_0},
}

@article {fatou_flower,
	AUTHOR = {Fatou, P.},
	TITLE = {Sur les \'{e}quations fonctionnelles},
	JOURNAL = {Bull. Soc. Math. France},
	FJOURNAL = {Bulletin de la Soci\'{e}t\'{e} Math\'{e}matique de France},
	VOLUME = {48},
	YEAR = {1920},
	PAGES = {208--314},
	ISSN = {0037-9484},
	MRCLASS = {DML},
	MRNUMBER = {1504797},
	URL = {http://www.numdam.org/item?id=BSMF_1920__48__208_1},
}

@book {lanford,
	AUTHOR = {Lanford, III, Oscar E. and Yampolsky, Michael},
	TITLE = {Fixed point of the parabolic renormalization operator},
	SERIES = {SpringerBriefs in Mathematics},
	PUBLISHER = {Springer, Cham},
	YEAR = {2014},
	PAGES = {viii+111},
	ISBN = {978-3-319-11706-5; 978-3-319-11707-2},
	MRCLASS = {37F25 (30D05 37F10)},
	MRNUMBER = {3308117},
	MRREVIEWER = {Artem Dudko},
	DOI = {10.1007/978-3-319-11707-2},
}

@misc{Yang,
	doi = {10.48550/ARXIV.1510.00043},
	
	url = {https://arxiv.org/abs/1510.00043},
	
	author = {Yang, Fei},
	
	keywords = {Dynamical Systems (math.DS), FOS: Mathematics, FOS: Mathematics, 37F25 (Primary), 37F10 (Secondary)},
	
	title = {Parabolic and near-parabolic renormalizations for local degree three},
	
	publisher = {arXiv},
	
	year = {2015},
	
	copyright = {arXiv.org perpetual, non-exclusive license}
}
	\ \\
	\noindent\textsc{\small Department of Mathematics, Harvard University, 1 Oxford St, Cambridge, Massachusetts 02138}\par\nopagebreak
	\medskip
	\noindent\textit{E-mail address}: \href{mailto:kapiamba@math.harvard.edu}{\texttt{kapiamba@math.harvard.edu}}
		
\end{document}